\numberwithin{equation}{section}%
\newtheorem{proposition}{Proposition}[section]
\newtheorem{definition}{Definition}
\newtheorem{remark}{Remark}[section]
\newtheorem{assumption}{Assumption}[section]
\newenvironment{proof}[1][Proof]{\noindent \textbf{#1.} }{\hfill$\Box$\par\medskip}
\DeclareMathOperator*{\argmax}{argmax}
\DeclareMathOperator*{\argmin}{argmin}
\DeclareMathOperator{\conv}{conv}
\DeclareMathOperator{\OPT}{OPT}
\DeclareMathOperator{\RN}{RN}
\DeclareMathOperator{\RA}{RA}
\DeclareMathOperator{\minquotes}{``min''}
\newcommand{\beqn}[1]{\begin{equation}\label{#1}}
\newcommand{\eeqn}{\end{equation}}
\definecolor{darkgreen}{rgb}{0,0.6,0}
\definecolor{aau2}{rgb}{0.0, 0.5, 0.69}
\definecolor{aau3}{rgb}{0.0, 0.53, 0.74}
\definecolor{aau4}{rgb}{0.0, 0.48, 0.65}
\definecolor{aau5}{rgb}{0.0, 0.45, 0.73}
\definecolor{rsap}{RGB}{130, 36, 51}
\definecolor{gsap}{RGB}{112, 164, 137}
\definecolor{tud}{rgb}{0.43,0.73,0.11}
\definecolor{verde}{rgb}{0.33,0.53,0.11}
\definecolor{ttffqq}{rgb}{0.0, 0.48, 0.65} 
\definecolor{ffqqqq}{rgb}{0.0, 0.5, 0.69} 
\tikzstyle{decision} = [diamond, draw, fill=blue!20,
\tikzstyle{block} = [rectangle, draw, fill=blue!20,
\tikzstyle{line} = [draw, -latex']
\tikzstyle{cloud} = [draw, ellipse,fill=red!20, node distance=3cm,
\tikzstyle{cloud2} = [draw, ellipse,fill=green!20, node distance=3cm,
\begin{document}
	
	\title{Bilevel optimization with a multi-objective lower-level problem: Risk-neutral and risk-averse formulations}
	
	\author{
		T. Giovannelli\thanks{Department of Industrial and Systems Engineering, Lehigh University, Bethlehem, PA 18015-1582, USA ({\tt tog220@lehigh.edu}).}
		\and
		G. D. Kent\thanks{Department of Industrial and Systems Engineering, Lehigh University, Bethlehem, PA 18015-1582, USA ({\tt gdk220@lehigh.edu}).}
		\and
		L. N. Vicente\thanks{Department of Industrial and Systems Engineering, Lehigh University, Bethlehem, PA 18015-1582, USA ({\tt lnv@lehigh.edu}).}
	}
	
	\maketitle
	
	\begin{abstract}
	In this work, we propose different formulations and gradient-based algorithms for deterministic and stochastic bilevel problems with conflicting objectives in the lower level. Such problems have received little attention in the deterministic case and have never been studied from a stochastic approximation viewpoint despite the recent advances in stochastic methods for single-level, bilevel, and multi-objective optimization.

    To solve bilevel problems with a multi-objective lower level, different approaches can be considered depending on the interpretation of the lower-level optimality. An optimistic formulation that was previously introduced for the deterministic case consists of minimizing the upper-level function over all non-dominated lower-level solutions. In this paper, we develop new risk-neutral and risk-averse formulations, address their main computational challenges, and develop the corresponding deterministic and stochastic gradient-based algorithms.   
    \end{abstract}

\section{Introduction}

In bilevel multi-objective optimization (BMO), at least one of the two levels of the bilevel problem has multiple objectives and can be modeled using the formulation
\begin{equation}\label{prob:bilevel_multiobj}
	\begin{split}
	\min_{x \in \mathbb{R}^n, \, y \in \mathbb{R}^m} ~~ & F_u(x,y) \\
	\mbox{s.t.}~~ & x \in X \\
 & y \in \argmin_{y \in Y(x)} ~~ F_\ell(x,y).\\
	\end{split}
\end{equation}
The upper-level (UL) and lower-level (LL) objective functions $F_u: \mathbb{R}^n\times\mathbb{R}^m \to \mathbb{R}^p$ and $F_{\ell}: \mathbb{R}^n\times\mathbb{R}^m \to \mathbb{R}^q$ are given by~$F_u = (f_u^1, \ldots, f_u^p)$ and~$F_{\ell} = (f_{\ell}^1, \ldots, f_{\ell}^q)$, with~$f_u^i$ and $f_{\ell}^j$ real-valued functions for all~$i \in \{1,\ldots,p\}$ and~$j \in \{1,\ldots,q\}$, respectively, and~$p \ge 1$ or~$q \ge 1$. In this general formulation, the UL variables $x$ are subjected to UL constraints ($x\in X$) and the LL variables $y$ are subjected to LL constraints ($y\in Y(x)$); however, for the rest of this paper, it is assumed that the LL variables are unconstrained (i.e., $Y(x)=\mathbb{R}^m$).
The set~$X$ will be assumed closed and convex, which allows us to obtain feasible points with respect to the~UL constraints by taking orthogonal projections onto~$X$.
In general, when at least one of the two levels is a multi-objective optimization problem, we can use the name bilevel multi-objective optimization~\cite{GEichfelder_2010,GEichfelder_2020,ASinha_KDeb_2009}. When a multi-objective problem only arises in one of the two levels, one also finds the terminology semivectorial bilevel optimization~\cite{SDempe_PMehlitz_2020}. 
 
Bilevel multi-objective optimization problems arise in applications related to defense, renewable energy systems, and fair machine learning. In the defense sector, a multi-objective bilevel problem for facility location was proposed in~\cite{AMLessin_BJLunday_RRHill_2019} to prevent an adversary from entering a territory by relocating wireless sensors in order to maximize the exposure of the attacker to the sensors (single-objective~UL problem) and minimize the conflicting objectives given by the sensor relocation time and the total number of sensors (multi-objective LL problem). In~\cite{XLuo_YLiu_XLiu_2021}, the authors propose a bilevel formulation to minimize the environmental impact of renewable energy systems and the resulting government expenditure (multi-objective~UL problem) and the energy cost paid by the end users (single-objective~LL problem). Bilevel optimization has recently been adopted to solve fair machine learning (ML) problems~\cite{MSOzdayi_MKantarcioglu_RIyer_2021,YRoh_etal_2020,MMKamani_etal_2020}, where the goal is to minimize the prediction error on a validation dataset by training an ML model to avoid discriminatory predictions against people with sensitive attributes. To ensure accurate and fair prediction outcomes in real-life decision-making applications, accuracy and fairness loss functions must be jointly considered, thus leading to bilevel problems where both~UL and~LL problems are multi-objective.

In this paper, we consider bilevel multi-objective optimization problems with a multi-objective lower level (BMOLL), which can be obtained from problem~\eqref{prob:bilevel_multiobj} by considering~$p = 1$ and~$q > 1$.  Hence, the problem to solve is
\begin{equation}\label{prob:single_bilevel}
	\begin{split}
	\min_{x \in \mathbb{R}^n, \, y \in \mathbb{R}^m} ~~ & f_u(x,y) \\
	\mbox{s.t.}~~ & x \in X \\
 & y \in \argmin_{y \in \mathbb{R}^m} ~~ F_\ell(x,y),\\
	\end{split}
\end{equation}
where the~UL objective function~$f_u: \mathbb{R}^n\times\mathbb{R}^m \to \mathbb{R}$ is real-valued. Throughout the paper, we assume~$f_u$ to be continuously differentiable and all the functions~$f_{\ell}^j$, with~$j \in \{1,\ldots,q\}$, to be twice continuously differentiable. We will formalize these assumptions in Section~\ref{sec:LL_multiobj_prob}. Note that denoting~$\Psi: \mathbb{R}^n \rightrightarrows \mathbb{R}^m$ as the set-valued mapping representing the optimal solutions of the~LL problem (we will identify this as the set of weak LL Pareto minimizers in Section~\ref{sec:definitions}), one can reformulate~\eqref{prob:single_bilevel} as follows:
\begin{equation}\label{eq:single_multi_bilevel}
    \underset{x \in \mathbb{R}^n, \, y \in \mathbb{R}^m}{\minquotes} ~~ \{f_u(x,y)~|~x \in X, \ y \in \Psi(x)\},
\end{equation}
where the notation~``min'' with double quotes is used in the literature to denote the ambiguity arising when there are multiple~LL optimal solutions~\cite{ASinha_PMalo_KDeb_2018,SDempe_PMehlitz_2020}. Since the~LL problem has conflicting objectives, given $x\in X$, a single optimal solution must be determined among the set~$\Psi(x)$, and several criteria can be considered that potentially lead to different solutions of the bilevel problem. In the deterministic case, researchers have focused on \textit{optimistic} formulations, excluding all those cases where the solution determined among the set of~LL Pareto optimal points is not the most favorable for the~UL problem.   

As opposed to bilevel problems with~UL and~LL single-objective functions~\cite{NCouellan_WWang_2015,NCouellan_WWang_2016,SGhadimi_MWang_2018,MHong_etal_2020,TChen_YSun_WYin_2021_closingGap,TGiovannelli_GKent_LNVicente_2022,DSow_KJi_YLiang_2021} (see also~\cite{RLiu_JGao_etal_2021,CChen_etal_2022,TGiovannelli_GKent_LNVicente_2022} for recent reviews), gradient-based methods for bilevel multi-objective problems have received less attention in the literature. Deterministic approaches for bilevel optimization with multi-objective~LL problems have been proposed in~\cite{YLv_ZWan_2014a,YLv_ZWan_2014b,RAndreani_etal_2019,SDempe_PMehlitz_2020}, where the LL problem is transformed into a single-objective problem by weighting the LL objective functions according to the weighted-sum approach that is utilized in multi-objective optimization~\cite{MEhrgott_2005}. The weights are then included among the UL optimization variables. 
Problems with multi-objective~UL and~LL problems ($p,q>1$) have been addressed in~\cite{XShi_HSXia_2001}, which uses the~$\varepsilon$-constraint method~\cite{MEhrgott_2005} at both levels to obtain a single-objective bilevel problem.
To the best of our knowledge, the only stochastic gradient-based algorithm for bilevel multi-objective problems has been proposed in~\cite{AGu_etal_2022}, where multiple objectives are considered at the upper level. We point out that the problem solved in~\cite{AGu_etal_2022} is significantly different from problem~\eqref{prob:bilevel_multiobj} since in~\cite{AGu_etal_2022} there are multiple~LL problems and each~UL objective function only depends on the optimal solution of one~LL problem. Moreover, in~\cite{AGu_etal_2022}, the authors do not attempt to determine the~UL Pareto front and consider a robust formulation of the~UL problem to minimize the maximum optimal value among all the~UL objective functions. 

As an alternative to the known optimistic formulation developed for~(\ref{prob:single_bilevel}), we propose new \textit{risk-neutral} and \textit{risk-averse} formulations, address their computational challenges, and develop their corresponding deterministic and stochastic gradient descent algorithms. Both formulations are inspired by looking at~$y$ as a parameter, rather than as a variable. In the risk-neutral case, we minimize a new function describing the mean of the UL function over an $x$-dependent set of optimal LL solutions. We propose a formulation that is tractable (by rather taking the mean over LL weights) and can lead to efficient algorithms (by sampling the weights).
The risk-averse case requires using the extension of Danskin's Theorem to the case where the maximum is taken over an $x$-dependent efficient solution set.

This paper is organized as follows.
We first review the bilevel stochastic gradient method in Section~\ref{sec:multilevel_multiobj_alg} (for a single objective in both the UL and LL).
In Section~\ref{sec:definitions}, we introduce basic definitions, results, and general assumptions.
We describe the known optimistic formulation for~(\ref{prob:single_bilevel}) in Section~\ref{sec:LL_multiobj_prob}. The new risk-neutral and risk-averse formulations for~(\ref{prob:single_bilevel}), as well as the corresponding gradient-based algorithms, are introduced in Sections~\ref{sec:adj_grad_LL_multiobj_prob} and~\ref{sec:adj_grad_LL_multiobj_prob_general}, respectively. Numerical results for synthetic bilevel problems with a multi-objective lower level are reported in Section~\ref{sec:numerical_exp}, which also describes the practical implementations of the proposed methods. Finally, we draw some concluding remarks in Section~\ref{sec:UL_multiobj_prob}, in particular how to develop the cases $p>1,q=1$ and $p,q>1$ from known building blocks.

\section{A review of bilevel stochastic gradient methods}\label{sec:multilevel_multiobj_alg}

Bilevel optimization with multi-objective upper or lower levels is based on (or uses as a reference) the bilevel single-objective~(BO) case ($p=q=1$), which can be modeled using the formulation
	\begin{equation}\label{prob:bilevel}
	\begin{split}
	\min_{x \in \mathbb{R}^n, \, y \in \mathbb{R}^m} ~~ & f_u(x,y) \\
	\mbox{s.t.}~~ & x \in X \\
 & y \in \argmin_{y \in \mathbb{R}^m} ~~ f_\ell(x,y).\\
	\end{split}
	\end{equation}
In~BO problems, the goal of the~UL problem is to minimize the~UL objective function~$f_u: \mathbb{R}^n\times\mathbb{R}^m \to \mathbb{R}$ over the~UL variables~$x$, which are subjected to~UL constraints ($x \in X$), and~LL variables~$y$, which are subjected to being an optimal solution of the~LL problem. The goal of the~LL problem is to minimize the~LL objective function $f_\ell: \mathbb{R}^n\times\mathbb{R}^m \to \mathbb{R}$ over the LL variables~$y$. 

Assuming that there exists a unique solution~$y(x)$ to the~LL problem for all $x \in X$, problem~BO is equivalent to a problem posed solely in terms of the~UL variables~$x$, and is given by
\begin{equation} \label{reduced}
\min_{x \in \mathbb{R}^n} \; f(x)=f_u(x,y(x)) \quad
	\mbox{s.t.} \quad x \in X.
\end{equation}

Recalling the assumptions that~$f_u$ is continuously differentiable,~$f_{\ell}$ twice continuously differentiable, and further assuming~$\nabla^2_{yy}f_\ell(x,y(x))$ to be non-singular, 
the gradient of~$f$ at~$x$ can be obtained from the well-known adjoint (or hypergradient) formula
\begin{equation} \label{adjoint}
\nabla f \; = \; \nabla_x f_u - \nabla_{xy}^2 f_\ell (\nabla^2_{yy}f_\ell)^{-1} \nabla_y f_u,
\end{equation}
where all gradients and Hessians on the right-hand side are evaluated at~$(x,y(x))$. The steepest descent direction for~$f$ at~$x$ is denoted by~$d(x, y(x)) = - \nabla f(x)$. To obtain the adjoint formula, one can apply the chain rule to $f_u(x,y(x))$, which leads to
$\nabla f = \nabla_x f_u + \nabla y \nabla_y f_u$.
Then, the Jacobian~$\nabla y(x)\in\mathbb{R}^{n\times m}$ can be derived from the~LL first-order necessary optimality conditions~$\nabla_y f_\ell(x,y(x))=0$ by applying the chain rule to both sides of this equation with respect to $x$. The~LL optimal solution function~$y(\cdot)$ is continuously differentiable due to the implicit function theorem~\cite{Ruding_1953}. The equation resulting from the application of the chain rule is given by~$\nabla^2_{yx}f_\ell + \nabla^2_{yy}f_\ell\nabla y^{\top} \; = \; 0$ (where all gradients and Hessians are evaluated at~$(x,y(x))$), which leads to
$\nabla y = - \nabla^2_{xy}f_\ell (\nabla^2_{yy}f_\ell)^{-1}$.

In Algorithm~\ref{alg:MOBSG}, we report a general framework for the bilevel stochastic gradient (BSG) \linebreak method~\cite{TGiovannelli_GKent_LNVicente_2022} for stochastic BO problems. Such a framework will be adapted to develop different algorithms for the~BMOLL problem considered in this paper. We adopt~$\xi_k$ to denote the random variables used to obtain stochastic estimates for~UL and~LL gradients and Hessians. 
An initial point $(x_0,y_0)$ and a sequence of positive scalars~$\{\alpha_k\}$ are required as input. In Step~1, any appropriate optimization method can be applied to obtain an approximate LL solution $\tilde y$ by solving the LL problem to a specified degree of accuracy. In Step~2, one computes an approximate negative BSG~$d(x_k, \tilde y_k, \xi_k)$, defined by the adjoint gradient in~\eqref{adjoint}, to update the~UL variables. In Step~3, the vector $x$ is updated by choosing a step size taken from the sequence of positive scalars~$\{\alpha_k\}$. When $X$ is a closed and convex constrained set different from $\mathbb{R}^n$, an orthogonal projection of $x_{k} + \alpha_k \, d(x_k, \tilde y_k, \xi_k)$ onto~$X$ is required (such a projection can be computed by solving a convex optimization problem). Regarding the stepsize sequence~$\{\alpha_k\}$, popular options in the stochastic gradient literature are fixed or decaying stepsize sequences~\cite{LBottou_FECurtis_JNocedal_2018}.
\begin{algorithm}[H]
	\caption{Bilevel Stochastic Gradient (BSG) Method}\label{alg:MOBSG}
	\begin{algorithmic}[1]
		\medskip
		\item[] {\bf Input:} $(x_0,y_0) \in \mathbb{R}^n \times \mathbb{R}^m$, $\{\alpha_k\}_{k \geq 0} > 0$.
		\medskip
		\item[] {\bf For $k = 0, 1, 2, \ldots$ \bf do}
		\item[] \quad\quad {\bf Step 1.}
		 Obtain an approximation $\tilde{y}_k$ to the LL optimal solution $y(x_k)$. 
 \nonumber
    \item[] \quad\quad {\bf Step 2.}
		 Compute a negative BSG $d(x_k,\tilde{y}_k,\xi_k)$.
		\item[] \quad\quad {\bf Step 3.} Compute $x_{k+1} = P_X ( x_{k} + \alpha_k \, d(x_k, \tilde y_k, \xi_k) )$.
		\item[] {\bf End do}
		\par\bigskip\noindent
    	\end{algorithmic}
    \end{algorithm}

As usual in the stochastic gradient literature~\cite{LBottou_FECurtis_JNocedal_2018}, due to the lack of reasonable stopping criteria for stochastic algorithms, we do not include a stopping condition in Algorithm~\ref{alg:MOBSG} (and in any of the algorithms developed in this paper). The convergence theory for the~BSG method developed in~\cite{TGiovannelli_GKent_LNVicente_2022} comprehensively covers several inexact settings, including the inexact solution of the~LL problem and the use of noisy estimates of the gradients and Hessians involved. The convergence rates of the~BSG method have been derived in~\cite{TGiovannelli_GKent_LNVicente_2022} under the assumptions of non-convexity, strong convexity, and convexity of the true objective function~$f$. The convergence theory of the algorithms introduced for the smooth case (i.e., optimistic and risk-neutral formulations in Sections~\ref{sec:LL_multiobj_prob}-\ref{sec:adj_grad_LL_multiobj_prob}, respectively) can be obtained as an extension of the convergence results presented in~\cite{TGiovannelli_GKent_LNVicente_2022}. The development of the convergence theory for the non-smooth case (i.e., risk-averse formulation in Section~\ref{sec:adj_grad_LL_multiobj_prob_general}) is left for future work.

\section{Basic definitions, results, and general assumptions}\label{sec:definitions}
{Given~$x \in X$, let us now focus on the multi-objective optimization problem given by the~LL problem in~\eqref{prob:single_bilevel}.
When the~LL objective functions are conflicting, minimizing one objective results in worse values for the others. Therefore, there is typically no single optimal solution that minimizes all objective functions simultaneously. In such cases, one is interested in obtaining a set of points where the value of one objective cannot be improved without deteriorating the values of the other objectives. Points with this property are called Pareto minimizers (or efficient or non-dominated points) and are introduced in Definitions~\ref{def:dominance}-\ref{def:pareto_point} below, which adapt the standard definitions in~\cite{MEhrgott_2005} to the~LL problem of~\eqref{prob:single_bilevel}.
\begin{definition}[LL Pareto dominance]\label{def:dominance}
    Given any two points~$\{y_1,y_2\} \subset \mathbb{R}^m$, we say that~$y_1$ dominates~$y_2$ if~$F_{\ell}(x,y_1) < F_{\ell}(x,y_2)$ componentwise. Moreover, we say that~$y_1$ weakly dominates~$y_2$ if~$F_{\ell}(x,y_1) \le F_{\ell}(x,y_2)$ componentwise and~$F_{\ell}(x,y_1) \ne F_{\ell}(x,y_2)$.
\end{definition}
\begin{definition}[LL Pareto minimizer]\label{def:pareto_point}
   A point~$y_* \in \mathbb{R}^m$ is a strict Pareto minimizer for the~LL problem of~\eqref{prob:single_bilevel} if no other point~$\bar{y} \in \mathbb{R}^m$ exists such that $y_*$ is weakly dominated by~$\bar{y}$. A point~$y_* \in \mathbb{R}^m$ is a weak Pareto minimizer if no other point~$\bar{y} \in \mathbb{R}^m$ exists such that $y_*$ is dominated by~$\bar{y}$.
\end{definition}

Let~$P_s(x)$ denote the set of strict~LL Pareto minimizers and~$P(x)$ the set of weak~LL Pareto minimizers. We wish to highlight that $P(x)$ represents $\Psi(x)$ from~\eqref{eq:single_multi_bilevel}.
Mapping the set~$P(x)$ into the objective space~$\mathbb{R}^m$ leads to the~LL Pareto front, which is defined as~$\{F_{\ell}(x,y)~:~y \in P(x)\}$. Note that Definition~\ref{def:pareto_point} implies that~$P_s(x) \subseteq P(x)$. 


To compute a Pareto front, one can use scalarization techniques to reduce a multi-objective problem into a single-objective one, which can then be solved using classical optimization approaches~\cite{MEhrgott_2005,KMiettinen_2012}. One popular scalarization technique is the weighted-sum method, which consists of weighting the objective functions into a single objective~$\sum_{j=1}^{q} \lambda_j f_{\ell}^j(x, y)$, where~$\lambda_j$ are non-negative weights, for all~$j \in \{1, \ldots, q\}$. 
A necessary and sufficient condition for weak~LL Pareto optimality based on the weighted-sum method is included in Proposition~\ref{prop:weighted_sum_method} below, along with a sufficient condition for equivalence between~$P_s(x)$ and~$P(x)$. We refer to~\cite{MEhrgott_2005,AMGeoffrion_1968,KMiettinen_2012} for the proof of such a proposition.

\begin{proposition}\label{prop:weighted_sum_method}
   Let the~LL objective functions~$f_{\ell}^1(x,\cdot), \ldots, f_{\ell}^q(x,\cdot)$ be convex for all $x\in X$. Then, $y_* \in P(x)$ if and only if there exist weights~$\lambda_j\geq0$, for all $j\in\{1,...,q\}$, not all zero, such that $y_* \in \argmin_{y \in \mathbb{R}^m} \sum_{j=1}^{q} \lambda_j f_{\ell}^j(x, y)$. Moreover, if the~LL objective functions~$f_{\ell}^1(x,\cdot), \ldots, f_{\ell}^q(x,\cdot)$ are strictly convex for a certain $x\in X$, then $P_s(x) = P(x)$.
\end{proposition}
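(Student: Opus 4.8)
The plan is to prove this classical weighted-sum characterization of weak Pareto optimality in two directions, together with the strict-convexity refinement, relying on convex analysis and standard separation arguments. I will treat $x \in X$ as fixed throughout, so the functions $f_\ell^j(x,\cdot)$ are simply convex functions of $y$ on $\mathbb{R}^m$, and I write $f^j(\cdot) = f_\ell^j(x,\cdot)$ for brevity.

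\textbf{Sufficiency (the easy direction).} First I would show that if $y_*$ minimizes $\sum_{j=1}^q \lambda_j f^j$ for some nonnegative weights not all zero, then $y_* \in P(x)$. Suppose, for contradiction, that $y_*$ is dominated by some $\bar y$, i.e. $f^j(\bar y) < f^j(y_*)$ for every $j$. Multiplying by $\lambda_j \ge 0$ and summing, the strict inequalities on the components with $\lambda_j > 0$ (at least one such index exists) give $\sum_j \lambda_j f^j(\bar y) < \sum_j \lambda_j f^j(y_*)$, contradicting the assumed minimality of $y_*$. Hence no point dominates $y_*$, so $y_*$ is a weak Pareto minimizer. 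This argument uses only nonnegativity of the weights and requires no convexity.

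\textbf{Necessity (the harder direction).} Here convexity is essential. Suppose $y_* \in P(x)$, so no $\bar y$ satisfies $f^j(\bar y) < f^j(y_*)$ for all $j$ simultaneously. The plan is to apply a convex separation argument in the image space. Consider the set
\begin{equation}\label{eq:image_set}
C \;=\; \Big\{\, z \in \mathbb{R}^q \;:\; \exists\, y \in \mathbb{R}^m \text{ with } f^j(y) \le z_j \text{ for all } j \,\Big\} - F_\ell(x,y_*),
\end{equation}
which is convex because each $f^j$ is convex (so the epigraph-type set is convex, and translation preserves convexity). Weak Pareto optimality of $y_*$ is exactly the statement that $C$ contains no point of the open negative orthant $-\mathrm{int}\,\mathbb{R}^q_+$. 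I would then invoke a supporting/separating hyperplane theorem to produce a nonzero vector $\lambda$ separating $C$ from $-\mathrm{int}\,\mathbb{R}^q_+$; checking signs against the directions $-e_j$ forces $\lambda_j \ge 0$ for all $j$, and nonzero gives "not all zero." Unwinding the separation inequality shows $\sum_j \lambda_j f^j(y) \ge \sum_j \lambda_j f^j(y_*)$ for all $y$, i.e. $y_*$ minimizes the weighted sum. The main obstacle is handling the separation cleanly: one must ensure the separating functional has the correct sign pattern and that the relative interior conditions for the separation theorem are met, which is where the convexity of every $f^j$ (not merely of the sum) is genuinely used.

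\textbf{The strict-convexity refinement.} Finally, assuming each $f^j$ is strictly convex, I would show $P_s(x) = P(x)$. Since $P_s(x) \subseteq P(x)$ always holds by Definition~\ref{def:pareto_point}, only the reverse inclusion needs proof. Take $y_* \in P(x)$; by the necessity direction just established, $y_*$ minimizes $g = \sum_j \lambda_j f^j$ for some nonnegative weights not all zero. Strict convexity of each $f^j$ makes $g$ strictly convex on the directions it sees, so its minimizer is unique; I would then argue that this uniqueness precludes the existence of any $\bar y$ weakly dominating $y_*$. Indeed, if $\bar y$ weakly dominated $y_*$ (componentwise $\le$ with at least one strict inequality on an index where the corresponding weight is positive, after possibly using strict convexity to rule out ties), then $g(\bar y) \le g(y_*)$ with the strict-convexity-driven strictness contradicting that $y_*$ is the unique minimizer. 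This yields $y_* \in P_s(x)$ and hence equality. The delicate point is matching the index of strict dominance to an index with positive weight; strict convexity is what lets me convert a weak componentwise inequality into a strict decrease of $g$, closing the argument.
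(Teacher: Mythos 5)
Your proposal is correct, and it is essentially the classical argument: the paper itself gives no proof of this proposition, deferring instead to the cited references \cite{MEhrgott_2005,AMGeoffrion_1968,KMiettinen_2012}, and your three steps (elementary scalarization for sufficiency, separation of the convex image set $C$ from the open cone $-\mathrm{int}\,\mathbb{R}^q_+$ for necessity, and uniqueness of the minimizer of a strictly convex weighted sum for $P_s(x)=P(x)$) are exactly the standard proof found there. Two small remarks. First, the separation step is less delicate than you suggest: since $-\mathrm{int}\,\mathbb{R}^q_+$ is open, nonempty, convex, and disjoint from $C$, the basic separating hyperplane theorem applies with no relative-interior hypotheses, and the sign pattern $\lambda_j \ge 0$ follows from boundedness of $\langle \lambda, \cdot\rangle$ on the recession directions $-e_j$, just as you say. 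Second, the ``delicate point'' you flag in the strict-convexity step is not needed at all: under Definition~\ref{def:dominance}, weak dominance of $y_*$ by $\bar y$ already entails $F_\ell(x,\bar y) \ne F_\ell(x,y_*)$, hence $\bar y \ne y_*$, and the componentwise inequalities give $g(\bar y) \le g(y_*)$ for the strictly convex $g = \sum_j \lambda_j f_\ell^j(x,\cdot)$; this makes $\bar y$ a second minimizer of $g$, contradicting uniqueness directly, with no need to produce a strict decrease of $g$ or to match the strictly dominated index to a positive weight.
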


For the remainder of the paper, we require Assumption~\ref{ass:convexity} below.
\begin{assumption}\label{ass:convexity}
    The~LL objective functions~$f_{\ell}^1(x,\cdot), \ldots, f_{\ell}^q(x,\cdot)$ are strictly convex for all $x\in X$. Further, the set~$P(x)$ is non-empty for all $x\in X$.
\end{assumption}

Under Assumption~\ref{ass:convexity}, it is clear from Proposition~\ref{prop:weighted_sum_method} that $P(x) = P_s(x)$. The following remark characterizes the non-emptiness of $P(x)$.

\begin{remark} \label{rem:PePs}
In the case $Y(x)=\mathbb{R}^m$ considered in this paper,
we point out that $P(x) \neq \emptyset$ if the LL objective functions~$f_{\ell}^1(x,\cdot), \ldots, f_{\ell}^q(x,\cdot)$ are uniformly convex, which further implies that $P(x)$ is also compact. When $Y(x)\neq\mathbb{R}^m$, we have that $P_s(x) \neq \emptyset$ (and thus $P(x) \neq \emptyset$) if the set $Y(x)$ is compact, in which case $P(x)$ is also compact. 
\end{remark}

\section{The known optimistic formulation}\label{sec:LL_multiobj_prob}

The so-called optimistic formulation of problem~\eqref{prob:single_bilevel} corresponds to the problem
\begin{equation}\label{prob:single_bilevel_pareto}
	\begin{split}
	\min_{x \in \mathbb{R}^n, \, y \in \mathbb{R}^m} ~~ & f_u(x,y) \\
	\mbox{s.t.}~~ & x \in X \\
 & y \in P(x),\\
	\end{split}
\end{equation}
where the set of~LL optimal solutions is given by the set of weak~LL Pareto minimizers~$P(x)$. In accordance with Proposition~\ref{prop:equivalence_opt} below, one can prove that problem~\eqref{prob:single_bilevel_pareto} is equivalent to
\begin{equation}\label{prob:weighted_MBM}
	\setlength{\jot}{5pt}
	\begin{split}
	\min_{x \in \mathbb{R}^n, \, \lambda \in \mathbb{R}^q, \, y \in \mathbb{R}^m} & ~~ f_u(x,y) \\
	\mbox{s.t.} & ~~ x \in X, \; \lambda \in \Lambda \\ 
	& ~~ y \in \argmin_{y \in \mathbb{R}^m} ~~ f_{\ell}(x,\lambda,y) := \lambda^\top F_\ell(x,y),
	\end{split}
\end{equation}
where~$\Lambda$ denotes the simplex set, i.e.,
\begin{equation}\label{eq:simplex_set}
\Lambda = \left\{\lambda \in \mathbb{R}^q ~\middle|~ \sum_{j=1}^{q} \lambda_j = 1, \, \lambda_j \ge 0 \ \forall j \in \{1, \ldots, q\} \right\}.
\end{equation}
Given~$(x,\lambda) \in X \times \Lambda$, let~$\Phi(x,\lambda) = \{y \in \mathbb{R}^m ~|~ y \in \argmin_{y \in \mathbb{R}^m} f_{\ell}(x,\lambda,y) \}$ denote the set of optimal solutions to the~LL problem in~\eqref{prob:weighted_MBM}. The equivalence between problems~\eqref{prob:single_bilevel_pareto} and~\eqref{prob:weighted_MBM} is stated below (assuming implicitly that each problem admits an optimal solution).

\begin{proposition}\label{prop:equivalence_opt}
Let Assumption~\ref{ass:convexity} hold. If~$(\bar{x},\bar{y})$ is an optimal solution to problem~\eqref{prob:single_bilevel_pareto}, then, for all~$\bar{\lambda} \in \Lambda$ such that~$\bar{y} \in \Phi(\bar{x},\bar{\lambda})$, the point~$(\bar{x},\bar{\lambda},\bar{y})$ is an optimal solution to problem~\eqref{prob:weighted_MBM}. Moreover, 
if~$(\bar{x},\bar{\lambda},\bar{y})$ is an optimal solution to problem~\eqref{prob:weighted_MBM}, then~$(\bar{x},\bar{y})$ is an optimal solution to problem~\eqref{prob:single_bilevel_pareto}. 
\end{proposition}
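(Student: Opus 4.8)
The plan is to reduce the entire equivalence to a single set identity relating the weak Pareto set to the family of weighted-sum solution sets, namely
\[
P(x) \;=\; \bigcup_{\lambda \in \Lambda} \Phi(x,\lambda) \qquad \text{for every } x \in X.
\]
Once this identity is available, both implications follow almost mechanically, because the objective $f_u(x,y)$ in~\eqref{prob:weighted_MBM} does not depend on~$\lambda$, so the two problems minimize the same function over feasible regions that project onto the same set in the $(x,y)$ variables.

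First I would establish the identity using Proposition~\ref{prop:weighted_sum_method}. Under Assumption~\ref{ass:convexity} the functions $f_{\ell}^j(x,\cdot)$ are strictly convex, hence convex, so the characterization applies: $y_* \in P(x)$ if and only if there exist weights $\lambda_j \ge 0$, not all zero, with $y_* \in \argmin_{y} \sum_{j=1}^{q} \lambda_j f_{\ell}^j(x,y)$. The only gap between these unnormalized weights and membership in~$\Lambda$ is a rescaling; dividing the weights by their (strictly positive) sum leaves the minimizer set unchanged, since multiplying the weighted-sum objective by a positive constant does not alter its argmin. This yields $y_* \in P(x)$ if and only if $y_* \in \Phi(x,\lambda)$ for some $\lambda \in \Lambda$, which is exactly the claimed identity. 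As a by-product, $\Phi(x,\lambda) \subseteq P(x)$ for every $\lambda \in \Lambda$, and every $\bar{y} \in P(x)$ admits at least one $\bar{\lambda} \in \Lambda$ with $\bar{y} \in \Phi(x,\bar{\lambda})$.

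Next I would prove the two implications. For the forward direction, let $(\bar{x},\bar{y})$ solve~\eqref{prob:single_bilevel_pareto} and fix any $\bar{\lambda} \in \Lambda$ with $\bar{y} \in \Phi(\bar{x},\bar{\lambda})$; this triple is feasible for~\eqref{prob:weighted_MBM}. Given any other feasible triple $(x,\lambda,y)$ of~\eqref{prob:weighted_MBM}, the inclusion $\Phi(x,\lambda) \subseteq P(x)$ shows that $(x,y)$ is feasible for~\eqref{prob:single_bilevel_pareto}, so $f_u(\bar{x},\bar{y}) \le f_u(x,y)$; since the objective of~\eqref{prob:weighted_MBM} equals $f_u(x,y)$, optimality of $(\bar{x},\bar{\lambda},\bar{y})$ follows. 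For the reverse direction, if $(\bar{x},\bar{\lambda},\bar{y})$ solves~\eqref{prob:weighted_MBM}, then $\bar{y} \in \Phi(\bar{x},\bar{\lambda}) \subseteq P(\bar{x})$ makes $(\bar{x},\bar{y})$ feasible for~\eqref{prob:single_bilevel_pareto}; for any feasible $(x,y)$ of~\eqref{prob:single_bilevel_pareto} the identity supplies a $\lambda \in \Lambda$ with $y \in \Phi(x,\lambda)$, making $(x,\lambda,y)$ feasible for~\eqref{prob:weighted_MBM} and forcing $f_u(\bar{x},\bar{y}) \le f_u(x,y)$.

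The only genuine obstacle is the set identity, and within it the normalization argument: one must be sure the weights produced by Proposition~\ref{prop:weighted_sum_method} are not all zero, so that rescaling into~$\Lambda$ is legitimate, which is precisely what that proposition guarantees. Everything after that is bookkeeping about feasible regions, and the implicit assumption (noted in the excerpt) that each problem admits an optimal solution removes any concern about the minima failing to exist.
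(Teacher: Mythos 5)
Your proposal is correct and takes essentially the same route as the paper: both proofs rest entirely on Proposition~\ref{prop:weighted_sum_method} to transfer feasible points between problems~\eqref{prob:single_bilevel_pareto} and~\eqref{prob:weighted_MBM}; you simply package the two directions of that proposition as the identity $P(x)=\bigcup_{\lambda\in\Lambda}\Phi(x,\lambda)$ and argue directly, whereas the paper phrases the same comparisons as contradictions. Your explicit rescaling of the not-all-zero weights onto the simplex~$\Lambda$ is a small detail the paper leaves implicit, not a different idea.
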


\begin{proof} 
Let~$(\bar{x},\bar{y})$ be an optimal solution to problem~\eqref{prob:single_bilevel_pareto}. We have~$(\bar{x}, \bar{y}) \in X \times P(\bar{x})$ and~$f_u(\bar{x},\bar{y}) \le f_u(x,y)$ for all~$(x,y) \in X \times P(x)$. Assume that there exists~$\lambda^0 \in \Lambda$ with~$\bar{y} \in \Phi(\bar{x},\lambda^0)$ such that~$(\bar{x},\lambda^0,\bar{y})$ is not an optimal solution to problem~\eqref{prob:weighted_MBM}. Therefore, there exists~$(\hat{x},\hat{\lambda}) \in X \times \Lambda$, with~$\hat{y} \in \Phi(\hat{x},\hat{\lambda})$, such that~$f_u(\bar{x},\bar{y}) > f_u(\hat{x},\hat{y})$. Note that from Proposition~\ref{prop:weighted_sum_method}, since~$\hat{y} \in \Phi(\hat{x},\hat{\lambda})$, it follows that~$\hat{y} \in P(\hat{x})$, which implies that $(\hat{x},\hat{y})$ is a feasible point for problem~\eqref{prob:single_bilevel_pareto}. All these facts contradict the optimality of~$(\bar{x},\bar{y})$ for problem~\eqref{prob:single_bilevel_pareto}.

    Vice versa, let~$(\bar{x},\bar{\lambda},\bar{y})$ be an optimal solution to problem~\eqref{prob:weighted_MBM}. We have~$(\bar{x},\bar{\lambda}) \in X \times \Lambda$, with $\bar{y} \in \Phi(\bar{x},\bar{\lambda})$, and~$f_u(\bar{x},\bar{y}) \le f_u(x,y)$ for all~$(x,\lambda) \in X \times \Lambda$, with~$y \in \Phi(x,\lambda)$. From Proposition~\ref{prop:weighted_sum_method}, since~$\bar{y} \in \Phi(\bar{x},\bar{\lambda})$, it follows that~$\bar{y} \in P(\bar{x})$, which implies that $(\bar{x},\bar{y})$ is a feasible point for problem~\eqref{prob:single_bilevel_pareto}. Assume that~$(\bar{x},\bar{y})$ is not an optimal solution to problem~\eqref{prob:single_bilevel_pareto}. Therefore, there exists~$(\hat{x},\hat{y}) \in X \times P(\hat{x})$ such that~$f_u(\bar{x},\bar{y}) > f_u(\hat{x},\hat{y})$. From Proposition~\ref{prop:weighted_sum_method}, since~$\hat{y} \in P(\hat{x})$, there exists~$\hat{\lambda} \in \Lambda$ such that $\hat{y} \in \Phi(\hat{x},\hat{\lambda})$, which implies that~$(\hat{x},\hat{\lambda},\hat{y})$ is a feasible point for problem~\eqref{prob:weighted_MBM}. All these facts contradict the optimality of~$(\bar{x},\bar{\lambda},\bar{y})$ for problem~\eqref{prob:weighted_MBM}.
\end{proof}

Note that the~LL objective function in problem~\eqref{prob:weighted_MBM} is~$f_{\ell}(x, \lambda, y) = \sum_{j = 1}^{q} \lambda_j f_{\ell}^j(x,y)$, where~$\lambda \in \Lambda$. Hence, problem~\eqref{prob:weighted_MBM} is a bilevel problem where both levels are single-objective functions and can be solved by applying the BSG method introduced in Section~\ref{sec:multilevel_multiobj_alg} (when the functions are stochastic).
Let us denote the optimal solution of the~LL problem in~\eqref{prob:weighted_MBM} by~$y(x, \lambda)$. For the calculation of the~BSG direction, we require Assumption~\ref{ass:ass_bsg} below, which ensures the existence of~$y(x, \lambda)$ for all~$\lambda \in \Lambda$.
To do this, we formally assume a certain smoothness of the functions we are dealing with.

\begin{assumption}\label{ass:cont_diff}
    The UL function $f_u$ is once continuously differentiable and all the LL functions $f_\ell^j$, with $j\in\{1,...,q\}$, are twice continuously differentiable.
\end{assumption}

We remark that Assumptions~\ref{ass:convexity} and \ref{ass:cont_diff} together imply that the Hessians~$\nabla_{yy}^2 f_{\ell}^j(x, y)$, for all~$j \in \{1, \ldots, q\}$, are positive definite for all~$x \in X$.

Further, we also assume the existence of a solution to the LL problem. We state here this requirement for both the optimistic and risk-neutral formulations together (although we formally introduce the risk-neutral case in Section~\ref{sec:adj_grad_LL_multiobj_prob}) to avoid repetition. It bears mentioning that this assumption does not encompass the risk-averse case here, as it requires a different approach, which we introduce in Section~\ref{sec:adj_grad_LL_multiobj_prob_general}.

\begin{assumption}[Existence of LL solution]\label{ass:ass_bsg}
    For any~$x \in X$ and~$\lambda \in \Lambda$, there exists a point~$y(x, \lambda)$ such that $\nabla_y f_{\ell} (x, \lambda, y(x, \lambda)) = \sum_{j=1}^{q} \lambda_j \nabla_y f_{\ell}^j(x, y(x,\lambda)) = 0$. Moreover, the stochastic estimates of the Hessians~$\nabla_{yy}^2 f_{\ell}^j(x, y)$, for all~$j \in \{1, \ldots, q\}$, are positive definite at all points.
\end{assumption}

Given the strict convexity of the LL functions imposed in Assumption~\ref{ass:convexity}, it then becomes clear under Assumption~\ref{ass:ass_bsg} that the 
point~$y(x, \lambda)$ is the unique solution of the LL problem.

Let $f_{\OPT}(x, \lambda) = f_u(x, y(x,\lambda))$, with $\OPT$ standing for optimistic. By applying the chain rule to~$f_u (x,y(x,\lambda))$, one obtains the gradient vectors
\begin{equation}  \label{eq:02}
    \nabla_x f_{\OPT} \; = \; \nabla_x f_u + \nabla_x y \nabla_y f_u, \qquad
    \nabla_{\lambda} f_{\OPT} \; = \; \nabla_{\lambda} y \nabla_y f_u. 
\end{equation}

To calculate the Jacobian $\nabla y(x,\lambda)\in\mathbb{R}^{(n+q)\times m}$, we take derivatives with respect to~$x$ and~$\lambda$ on both sides of the~LL first-order necessary optimality conditions $\nabla_y f_\ell(x,\lambda,y(x,\lambda))=\sum_{j=1}^{q} \lambda_j \nabla_y f^j_\ell(x,y(x,\lambda))=0$, yielding the equations
\begin{equation}  \label{eq:04}
    \nabla^2_{y x} f_{\ell} + \nabla^2_{y y} f_{\ell} \nabla_x y^{\top} \; = \; 0, \qquad
     \nabla^2_{y \lambda} f_{\ell} + \nabla^2_{y y} f_{\ell} \nabla_{\lambda} y^{\top} \; = \; 0.
\end{equation}
Again, the differentiability of~$y(\cdot)$ with respect to~$x$ and~$\lambda$ is a consequence of the implicit function theorem~\cite{Ruding_1953}. Under Assumption~\ref{ass:ass_bsg}, we can obtain~$\nabla_{x} y$ and~$\nabla_{\lambda} y$ from~\eqref{eq:04} and plug their values into~\eqref{eq:02}, which leads to
\begin{equation}  \label{eq:044}
     \nabla_x f_{\OPT} \; = \; \nabla_x f_u - \nabla_{xy}^2 f_\ell (\nabla_{yy}^2 f_\ell)^{-1} \nabla_y f_u, \qquad \nabla_{\lambda} f_{\OPT} \; = \; - \nabla_{\lambda y}^2 f_\ell (\nabla_{yy}^2 f_\ell)^{-1} \nabla_y f_u,
\end{equation}
where all gradients and Hessians are evaluated at $(x,y(x,\lambda))$. Note that
\begin{alignat}{1}\label{eq:222}
        \nabla_{xy}^2 f_{\ell} = \sum_{j = 1}^{q} \lambda_j \nabla_{xy}^2 f_{\ell}^j, \quad 
        \nabla_{yy}^2 f_{\ell} = \sum_{j = 1}^{q} \lambda_j \nabla_{yy}^2 f_{\ell}^j, \quad
        \nabla_{\lambda y}^2 f_{\ell} = \left(\nabla_y f_{\ell}^1, \ldots, \nabla_y f_{\ell}^q\right)^{\top}.
\end{alignat}  
Therefore, from~\eqref{eq:044}--\eqref{eq:222}, one can obtain the adjoint gradient~$\nabla f_{\OPT}$ by concatenating the subvectors~$\nabla_x f_{\OPT}$ and~$\nabla_{\lambda} f_{\OPT}$ into a single vector. In the stochastic case, all the gradients and Hessians on the right-hand sides of~\eqref{eq:044} can be replaced by corresponding stochastic estimates.

In Algorithm~\ref{alg:BSMG_MOLL_opt}, we introduce a bilevel stochastic gradient method to solve the optimistic formulation of problem~\eqref{prob:single_bilevel}, given by problem~\eqref{prob:weighted_MBM}. Note that the main differences between Algorithm~\ref{alg:BSMG_MOLL_opt} and the classical~BSG  method reported in Algorithm~\ref{alg:MOBSG} are Step~2, where one now computes a (negative) BSG~$d\left(x_k,\lambda_k,\Tilde{y}_k,\xi_k\right)$ to approximate~$-\nabla f_{\OPT} = -(\nabla_x f_{\OPT},\nabla_{\lambda} f_{\OPT})$, and Step~3, where the orthogonal projection is now applied to the $\lambda$ variables as well.

\begin{algorithm}[H]
\caption{BSG-OPT Method}\label{alg:BSMG_MOLL_opt}
\begin{algorithmic}[1]
		\medskip
		\item[] {\bf Input:} $(x_0,\lambda_0,y_0) \in \mathbb{R}^n \times \mathbb{R}^q \times \mathbb{R}^m$, $\{\alpha_k\}_{k \geq 0} > 0$.
		\medskip
		\item[] {\bf For $k = 0, 1, 2, \ldots$ \bf do}
		\item[] \quad\quad {\bf Step 1.}
		 Obtain an approximation $\tilde{y}_k$ to the LL optimal solution $y(x_k,\lambda_k)$.
 \nonumber
    \item[] \quad\quad {\bf Step 2.}
		 Compute a negative BSG $d(x_k,\lambda_k,\tilde{y}_k,\xi_k)$.
		\item[] \quad\quad {\bf Step 3.} Compute $\left(x_{k+1},\lambda_{k+1}\right) = P_{X\Lambda} ( \left(x_{k},\lambda_k\right) + \alpha_k \, d(x_k, \lambda_k, \tilde y_k, \xi_k) )$, where $P_{X\Lambda}$ projects the $x$ and $\lambda$ variables onto the feasible regions $X$ and $\Lambda$, respectively.
		\item[] {\bf End do}
		\par\bigskip\noindent
\end{algorithmic}
\end{algorithm}

Note that, in principle, every point~$y\in P(x)$ can be considered an~LL optimal solution. Different LL~Pareto points have a different impact on the~UL objective function and, accordingly, they can lead to different optimal solutions to the bilevel problem. However, by using the optimistic formulation~\eqref{prob:weighted_MBM}, only the LL~Pareto point that is most favorable for the~UL objective function is selected among all the points~$y\in P(x)$. Thus, we will now consider new alternative approaches to the optimistic formulation~\eqref{prob:weighted_MBM}.

\section{A new risk-neutral formulation }\label{sec:adj_grad_LL_multiobj_prob}

In this section, we introduce a new formulation for problem~\eqref{prob:single_bilevel}. To gain intuition, suppose that the set of weak~LL Pareto minimizers is the same for all feasible values of $x$, i.e., $P(x) = P$ for all~$x \in X$. By interpreting~$y$ as a parameter, one can consider the parametric optimization problem 
\begin{equation}\label{prob:param_prob}
	\setlength{\jot}{5pt}
	\begin{rcases}
	\min_{x \in \mathbb{R}^n} & f_u(x,y) \\
	~~ \mbox{s.t.}  & x \in X\\
	\end{rcases}
	~~ \text{ with }y \in P,
\end{equation}
which can be addressed by considering two approaches in addition to the optimistic one. The {\it risk-neutral} approach assumes that~$y$ is a random vector with a probability distribution defined over~$P$ and considers the formulation
\begin{equation} \label{prob:param_average_approach}
\min_{x \in X} ~~ \mathbb{E}_{y \sim P}\left[ f_u(x,y) \right],
\end{equation}
where~$\mathbb{E}_{y \sim P}$ denotes the expected value that is taken with respect to the distribution of~$y$ over the domain~$P$. The objective function of problem~\eqref{prob:param_average_approach} can be approximated by using a sample mean, and the resulting problem can be solved by applying the~SG method.

In the risk-neutral formulation for the general case with $P(x)$, given~$x$ and assuming that~$y$ is a random vector with a probability distribution defined over~$P(x)$, the problem to solve is
\begin{equation} \label{prob:average_approach_exp_0}
\min_{x \in X} ~~ \mathbb{E}_{y \sim P(x)}\left[ f_u(x,y) \right].
\end{equation}

Under Assumption~\ref{ass:convexity}, we consider a companion problem to~\eqref{prob:average_approach_exp_0} that will provide us with a tractable solution procedure. In particular, assuming that~$\lambda$ is a continuous random vector with a probability distribution defined over~$\Lambda$ (defined in~\eqref{eq:simplex_set}), the problem we will consider instead is
\begin{equation} \label{prob:average_approach_exp}
\min_{x \in X} ~~ \mathbb{E}_{\lambda \sim \Lambda}\left[ f_u(x,y(x,\lambda)) \right],
\end{equation}
where~$y(x,\lambda)$ now denotes the optimal solution of the problem
\begin{equation} \label{prob:average_approach_exp_2}
\min_{y \in \mathbb{R}^m} ~~ F_{\ell}(x, y)^{\top} \lambda.
\end{equation}
Again, Assumptions~\ref{ass:convexity}, \ref{ass:cont_diff}, and~\ref{ass:ass_bsg} all together imply that~(\ref{prob:average_approach_exp_2}) has a unique solution.

Under Assumptions~\ref{ass:convexity}, \ref{ass:cont_diff}, and~\ref{ass:ass_bsg}, one can also calculate the~BSG direction for the risk-neutral formulation. 
In practice, one can consider a finite set~$\Lambda_{\RN} = \{\lambda^{1}, \ldots, \lambda^{N}\} \subset \Lambda$ that corresponds to a fine-scale discretization of~$\Lambda$, with~$\RN$ standing for risk-neutral. The corresponding set of weak~LL Pareto minimizers is~$\{y(x,\lambda^1),\ldots,y(x,\lambda^N)\} \subset P(x)$. Therefore, problem~\eqref{prob:average_approach_exp} can be approximated as
\begin{equation} \label{prob:average_approach_approx}
\min_{x \in X} ~~ f_{\RN}(x) \; = \;\frac{1}{N}\sum_{i=1}^{N}f_u\left(x,y(x,\lambda^i)\right).
\end{equation}
Note that the gradient of the objective function in~\eqref{prob:average_approach_approx} is given by
\begin{equation}\label{eq:grad_f}
    \nabla f_{\RN}(x) \; = \; \frac{1}{N}\sum_{i=1}^{N} \nabla f^i_{\RN}(x),
\end{equation}
where~$f^i_{\RN}(x) = f_u(x,y(x,\lambda^i))$ for all~$i \in \{1,\ldots,N\}$.

By applying the chain rule to~$f_u(x,y(x,\lambda^i))$, one obtains
\begin{equation} \label{adjoint_jacobian}
\nabla f^i_{\RN} \; = \; \nabla_x f_u + \nabla_{x} y(x,\lambda^i) \nabla_y f_u.
\end{equation} 
Then, the Jacobian~$\nabla_{x} y(x,\lambda^i)\in\mathbb{R}^{n\times m}$ can be calculated through the first-order~LL Pareto necessary optimality condition~$\sum_{j=1}^{q} \lambda_j^i \nabla_y f^j_\ell(x,y(x,\lambda^i))=0$, where~$\lambda_j^i$ denotes the $j$-th component of the vector~$\lambda^i$, for all~$i \in \{1, \ldots, N\}$. In particular, by taking the derivative of both sides of this equation with respect to $x$, using the chain rule and the implicit function theorem, we obtain
\begin{equation}\label{eq:221}
\sum_{j=1}^{q} \lambda_j^i \left(\nabla^2_{yx}f_\ell^j + \nabla^2_{yy}f^j_\ell\nabla_{x} y(x,\lambda^i)^{\top}\right) = 0,
\end{equation}
where all Hessians are evaluated at~$(x,y(x,\lambda^i))$. 
We recall here that Assumptions~\ref{ass:convexity} and \ref{ass:cont_diff} together imply that any convex combination of the Hessians~$\nabla_{yy}^2 f_{\ell}^j(x, y)$, $j \in \{1, \ldots, q\}$, is positive definite, and thus non-singular.
Equation~\eqref{eq:221} yields 
\begin{equation}\label{eq:jacobian_formula}
\nabla_{x} y(x,\lambda^i) \; = \; - \left(\sum_{j=1}^{q} \lambda_j^i\nabla^2_{xy}f^j_\ell \right) \left(\sum_{j=1}^{q} \lambda_j^i \nabla^2_{yy}f^j_\ell \right)^{-1}, \text{ for all $i \in \{1, \ldots, N\}$}.
\end{equation}
Plugging~\eqref{eq:jacobian_formula} into~\eqref{adjoint_jacobian}, one obtains
\begin{equation} \label{adjoint_2}
\nabla f^i_{\RN} \; = \; \nabla_x f_u - \left(\sum_{j=1}^{q} \lambda_j^i\nabla^2_{xy}f^j_\ell \right) \left(\sum_{j=1}^{q} \lambda_j^i \nabla^2_{yy}f^j_\ell \right)^{-1} \nabla_y f_u,
\end{equation}
where all gradients and Hessians on the right-hand side are evaluated at~$(x,y(x,\lambda^i))$. In the stochastic case, all the gradients and Hessians on the right-hand side of~\eqref{adjoint_2} can be replaced by corresponding stochastic estimates.

Since the number of elements in~$\Lambda_{\RN}$ can be significantly large, one can apply~SG to solve problem~\eqref{prob:average_approach_approx} by randomly choosing a set of samples (i.e., a mini-batch) from~$\Lambda_{\RN}$. Denoting a mini-batch as~$\Lambda_Q = \{\lambda^1, \ldots, \lambda^Q\} \; \subseteq\; \Lambda_{\RN}$, where~$Q$ is the mini-batch size, the corresponding sample set of~LL Pareto minimizers~$\{y(x,\lambda^1),\ldots,y(x,\lambda^Q)\}$ can be used to compute a~SG for~$\nabla f_{\RN}$ in~\eqref{eq:grad_f}. 

In Algorithm~\ref{alg:BSMG_MOLL_avg}, we introduce a bilevel stochastic gradient method to solve the risk-neutral interpretation of problem~\eqref{prob:single_bilevel} given by the formulation~\eqref{prob:average_approach_approx}. We adopt~$(\xi_{i})_k$ to denote the random variables used to obtain stochastic estimates for the~UL and~LL gradients and Hessians, for all~$i \in \{1, \ldots, Q\}$. For the sake of simplicity, at Steps~2--3, we denote~$\tilde{y}_k = \{(\tilde{y}(x_k,\tilde{\lambda}^i)~|~i \in \{1, \ldots, Q\}\}$ and~$\xi_k = \{(\xi_{i})_k~|~i \in \{1, \ldots, Q\}\}$. 
    
\begin{algorithm}[H]
\caption{BSG-RN Method}\label{alg:BSMG_MOLL_avg}
\begin{algorithmic}[1]
\medskip
		\item[] {\bf Input:} $x_0 \in \mathbb{R}^n$, $\{\alpha_k\}_{k \geq 0} > 0$.
		\medskip
		\item[] {\bf For $k = 0, 1, 2, \ldots$ \bf do}
\smallskip 
\item[] \quad\quad {\bf Step 1.} Obtain a mini-batch~$\{\tilde{\lambda}^1, \ldots, \tilde{\lambda}^Q\}$ and determine $\{\tilde{y}(x_k,\tilde{\lambda}^1),\ldots,\tilde{y}(x_k,\tilde{\lambda}^Q)\}$ to approximate the~LL Pareto optimal points~$\{y(x_k,\tilde{\lambda}^1),\ldots,y(x_k,\tilde{\lambda}^Q)\}$.
\smallskip
\item[] \quad\quad {\bf Step 2.}
	 Obtain a negative BSG $d(x_k,\tilde{y}(x_k,\tilde{\lambda}^i),(\xi_{i})_k)$ to approximate~$-\nabla f^i_{\RN}(x_k)$ for all~$i \in \{1,\ldots,Q\}$ and compute $d(x_k,\tilde{y}_{k},\xi_k) = (1/Q)\sum_{i=1}^{Q} d(x_k,\tilde{y}(x_k,\tilde{\lambda}^i),(\xi_{i})_k)$. 
\smallskip 
\item[] \quad\quad {\bf Step 3.} Compute $x_{k+1} = P_X ( x_{k} + \alpha_k \, d(x_k,\tilde{y}_{k},\xi_k) )$.
\nonumber
	\item[] {\bf End do}
		\par\bigskip\noindent
	\end{algorithmic}
\end{algorithm}

\section{A new risk-averse formulation}\label{sec:adj_grad_LL_multiobj_prob_general}

In this section, we introduce another new formulation for problem~\eqref{prob:single_bilevel}. Again, to gain intuition, suppose~$P(x) = P$ for all~$x \in X$, and consider problem~\eqref{prob:param_prob}.
The \textit{risk-averse} (or robust or pessimistic) formulation is given by the problem
\begin{equation} \label{prob:param_pessimistic_approach}
\min_{x \in X} \max_{y \in P}\; f_u(x,y), 
\end{equation}   
which can be reformulated as~$\min_{x \in X}\; f_{\RA}(x)$ by introducing~$f_{\RA}(x) = \max_{y \in P}\; f_u(x,y)$, with $\RA$ standing for risk-averse. 
Since~$f_{\RA}$ is nonsmooth, one can solve such a problem by applying a stochastic subgradient algorithm, where a subgradient can be obtained from the subdifferential of~$f_{\RA}$ at~$x$. Based on Danskin's Theorem~\cite{JMDanskin_1967}, such a subdifferential is given by~$\partial f_{\RA}(x) = \conv\{\nabla_x f_u(x,y) \; | \; y \in Y_0(x)\}$, where~$Y_0(x) = \{\bar{y} \in P \; | \; f_u(x,\bar{y}) = \max_{y \in P}\; f_u(x,y) \}$.

In the risk-averse formulation for the general case with $P(x)$, the problem to solve is
\begin{equation} \label{prob:param_pessimistic_approach_P(x)}
\min_{x \in X} \max_{y \in P(x)}\; f_u(x,y),
\end{equation}   
which can be reformulated as~$\min_{x \in X}\; f_{\RA}(x)$ by introducing~$f_{\RA}(x) = \max_{y \in P(x)}\; f_u(x,y)$. 
Under Assumptions~\ref{ass:convexity} and~\ref{ass:cont_diff}, let us specify~$P(x)$ algebraically by using the~LL first-order necessary and sufficient conditions for Pareto optimality (see Proposition~\ref{prop:weighted_sum_method}) as follows:
\begin{equation}\label{eq:def_P(x)}
P(x) \; = \; \left\{y \in \mathbb{R}^m~\middle|~\exists \lambda \in \Lambda \text{ such that } \sum_{j=1}^{q} \lambda_j \nabla_y f^j_\ell(x,y)=0\right\}.
\end{equation}

In accordance with Proposition~\ref{prop:equivalence_ra} below (where we will implicitly assume the well definedness of each problem), one can prove that Problem~\eqref{prob:param_pessimistic_approach_P(x)} is equivalent to the problem
\begin{equation} \label{prob:param_pessimistic_approach_P(x)_2}
\min_{x \in X} \max_{(y, \lambda) \in \tilde{P}(x)}\; f_u(x,y),
\end{equation}
where
\begin{equation}\label{eq:def_Ptilde(x)}
\tilde{P}(x) \; = \; \left\{(y, \lambda) \in \mathbb{R}^m \times \mathbb{R}^q ~\middle|~\lambda \in \Lambda \text{ and }\sum_{j=1}^{q} \lambda_j \nabla_y f^j_\ell(x,y)=0\right\}.
\end{equation}
Problem~\eqref{prob:param_pessimistic_approach_P(x)_2} can be reformulated as~$\min_{x \in X}\; \tilde{f}_{\RA}(x)$ by introducing the function $\tilde{f}_{\RA}(x) = \max_{(y, \lambda) \in \tilde{P}(x)}\; f_u(x,y)$. The 
optimization problem defining~$\tilde{f}_{\RA}(x)$ can be written as
\begin{equation}\label{prob:single_bilevel_pareto_practical}
	\begin{split}
	\max_{y \in \mathbb{R}^m, \, \lambda \in \mathbb{R}^q} ~~ & f_u(x,y) \\
	\mbox{s.t.}~~ & \lambda \in \Lambda, \\
 & \sum_{j=1}^{q} \lambda_j \nabla_y f^j_\ell(x,y)=0.
	\end{split}
\end{equation}
Note that from the definitions of~$P(x)$ and~$\tilde{P}(x)$ in~\eqref{eq:def_P(x)} and~\eqref{eq:def_Ptilde(x)}, respectively, we have
\begin{equation}\label{prop:equiv_P(x)_Ptilde(x)}
y \in P(x) \text{ if and only if } (y,\lambda) \in \tilde{P}(x) \text{ for some } \lambda \in \Lambda.
\end{equation}

\begin{proposition}\label{prop:equivalence_ra}
$\bar{x}$ is an optimal solution of problem~\eqref{prob:param_pessimistic_approach_P(x)} if and only if~$\bar{x}$ is an optimal solution of problem~\eqref{prob:param_pessimistic_approach_P(x)_2}.
\end{proposition}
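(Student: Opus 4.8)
The plan is to reduce the equivalence of the two outer minimization problems to a pointwise identity between their objective functions $f_{\RA}$ and $\tilde{f}_{\RA}$. Specifically, I would first fix an arbitrary $x \in X$ and argue that the two inner maximization problems share the same optimal value, i.e., $f_{\RA}(x) = \tilde{f}_{\RA}(x)$. Once this identity is established for every $x \in X$, the two problems $\min_{x \in X} f_{\RA}(x)$ and $\min_{x \in X} \tilde{f}_{\RA}(x)$ become literally the same optimization problem, so they admit exactly the same set of optimal solutions, which yields both directions of the stated equivalence at once.

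To prove the pointwise identity, I would exploit that the inner objective $f_u(x,y)$ depends only on $y$ and not on the auxiliary weight vector $\lambda$. The key tool is the equivalence~\eqref{prop:equiv_P(x)_Ptilde(x)}, which states that $y \in P(x)$ holds if and only if $(y,\lambda) \in \tilde{P}(x)$ for some $\lambda \in \Lambda$; in other words, $P(x)$ is exactly the projection of $\tilde{P}(x)$ onto the $y$-coordinate. I would then compare the two sets of attainable objective values. For one inequality, any feasible pair $(y,\lambda) \in \tilde{P}(x)$ has $y \in P(x)$ by~\eqref{prop:equiv_P(x)_Ptilde(x)}, so its objective value $f_u(x,y)$ is already attained in the first inner problem; for the reverse inequality, any $y \in P(x)$ can be lifted to some $(y,\lambda) \in \tilde{P}(x)$ with the identical objective value $f_u(x,y)$. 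Hence the two maximizations range over the same set of objective values and therefore attain the same maximum.

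The argument is essentially bookkeeping, so the only point requiring care is the well-definedness of the maxima, which we assume here (as noted before the statement). Under Assumptions~\ref{ass:convexity} and~\ref{ass:cont_diff}, the algebraic description~\eqref{eq:def_P(x)} of $P(x)$ via the weighted-sum stationarity condition is precisely what makes~\eqref{prop:equiv_P(x)_Ptilde(x)} available, so no additional regularity is needed beyond what guarantees that the supremum is achieved. I do not anticipate any genuine obstacle: the entire content is that augmenting the feasible set with the $\lambda$ variable, which does not enter the objective, leaves the optimal value of the inner problem unchanged.
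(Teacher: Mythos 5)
Your proposal is correct, and it takes a cleaner route than the paper, though both rest on the same two ingredients: the projection property~\eqref{prop:equiv_P(x)_Ptilde(x)} and the fact that $f_u$ does not depend on $\lambda$. The difference is structural. You first establish the pointwise identity $f_{\RA}(x) = \tilde{f}_{\RA}(x)$ for every $x \in X$, after which the two outer minimizations are literally the same problem and the equivalence of their solution sets is immediate, covering both directions at once. The paper instead runs two separate proof-by-contradiction arguments, one per direction, comparing objective values at specific optimal points and showing that any maximizer $(\hat{y},\hat{\lambda})$ over $\tilde{P}(\hat{x})$ projects to a maximizer $\hat{y}$ over $P(\hat{x})$. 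Your decomposition buys something concrete: the paper's ``or, equivalently'' steps (e.g., rewriting $\tilde{f}_{\RA}(\bar{x}) > \tilde{f}_{\RA}(\hat{x})$ as $f_u(\bar{x},\bar{y}) > f_u(\hat{x},\hat{y})$ with $\bar{y}$ a maximizer over $P(\bar{x})$) tacitly invoke exactly the pointwise identity $f_{\RA} = \tilde{f}_{\RA}$ without having isolated it, whereas you prove it up front via the equality of the two sets of attainable objective values. Your version is thus both shorter and more transparent about where the assumption of well-definedness of the inner maxima is actually used; it also generalizes verbatim to suprema if attainment were dropped, since equality of value sets does not require the maxima to be achieved.
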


\begin{proof}
Let~$\bar{x}$ be an optimal solution of problem~\eqref{prob:param_pessimistic_approach_P(x)}. Therefore, we can write~$f_{\RA}(\bar{x}) \le f_{\RA}(\tilde{x})$ for all~$\tilde{x} \in X$ or, equivalently,~$f_u(\bar{x},\bar{y}) \le f_u(\tilde{x},\tilde{y})$ for all~$\tilde{x} \in X$, where~$\bar{y} \in \argmax_{y \in P(\bar{x})} f_u(\bar{x},y)$ and~$\tilde{y} \in \argmax_{y \in P(\tilde{x})} f_u(\tilde{x},y)$. Now, assume that $\bar{x}$ is not a minimizer of the outer minimization problem in~\eqref{prob:param_pessimistic_approach_P(x)_2}. Then, there exists a point~$\hat{x} \in X$ such that~$\tilde{f}_{\RA}(\bar{x}) > \tilde{f}_{\RA}(\hat{x})$ or, equivalently,~$f_u(\bar{x},\bar{y}) > f_u(\hat{x},\hat{y})$, with~$\hat{y} \in \argmax_{(y,\lambda) \in \tilde{P}(\hat{x})} f_u(\hat{x},y)$. From~\eqref{prop:equiv_P(x)_Ptilde(x)} and from the fact that the objective function of the inner maximization problem in~\eqref{prob:param_pessimistic_approach_P(x)_2} does not depend on~$\lambda$, it follows that~$\hat{y} \in \argmax_{y \in P(\hat{x})} f_u(\hat{x},y)$. Therefore, $f_{\RA}(\bar{x}) > f_{\RA}(\hat{x})$, which contradicts the optimality of~$\bar{x}$ for problem~\eqref{prob:param_pessimistic_approach_P(x)}.

Now, let~$\bar{x}$ be an optimal solution of problem~\eqref{prob:param_pessimistic_approach_P(x)_2}. Therefore, $\tilde{f}_{\RA}(\bar{x}) \le \tilde{f}_{\RA}(\tilde{x})$ for all~$\tilde{x} \in X$ or, equivalently, $f_u(\bar{x},\bar{y}) \le f_u(\tilde{x},\tilde{y})$ for all~$\tilde{x} \in X$, where~$\bar{x} \in \argmax_{(y,\lambda) \in \tilde{P}(\bar{x})} f_u(\bar{x},y)$ and~$\tilde{y} \in \argmax_{(y,\lambda) \in \tilde{P}(\tilde{x})} f_u(\tilde{x},y)$. Again, from~\eqref{prop:equiv_P(x)_Ptilde(x)} and from the fact that the objective function of the inner maximization problem in~\eqref{prob:param_pessimistic_approach_P(x)_2} does not depend on~$\lambda$, we have~$\tilde{y} \in \argmax_{y \in P(\tilde{x})} f_u(\tilde{x},y)$. Therefore,~$f_{\RA}(\bar{x}) \le f_{\RA}(\tilde{x})$ for all~$\tilde{x} \in X$, which shows that~$\bar{x}$ is an optimal solution for problem~\eqref{prob:param_pessimistic_approach_P(x)}.
\end{proof}

For the calculation of a~BSG direction, and under Assumption~\ref{ass:cont_diff}, we start by introducing the Lagrangian function of problem~\eqref{prob:single_bilevel_pareto_practical} as~$\mathcal{L}(x,y,\lambda,z) = f_u(x,y) + z_{I}^{\top} \lambda + z_{E}^{\top} (\sum_{i=1}^{q} \lambda_i - 1, \sum_{j=1}^{q} \lambda_j \nabla_y f^j_\ell(x,y))$, where~$z_{I} \in \mathbb{R}^{q}$ and~$z_{E} \in \mathbb{R}^{m+1}$ are the vectors of Lagrange multipliers associated with the inequality and equality constraints, respectively. We will assume below the satisfaction of the first-order KKT conditions and the linear independence of the gradients of the active constraints (LICQ)~\cite{JNocedal_SJWright_2006} for problem~\eqref{prob:single_bilevel_pareto_practical}.

\begin{assumption}\label{ass:LICQ}
    For all~$x \in X$, there exists a $(y(x),\lambda(x))$ satisfying the first-order KKT conditions for problem~\eqref{prob:single_bilevel_pareto_practical} with associated multipliers $(z_I(x),z_E(x))$ such that the LICQ is satisfied.
\end{assumption}

Based on~\cite[Corollary 4.11]{AVFiacco_1983}, under Assumption~\ref{ass:LICQ} and additionally requiring~$\tilde{P}(x)$ to be non-empty and compact, the subdifferential of~$\tilde{f}_{\RA}$ at~$x$ is given by
\begin{equation}\label{eq:subdiff_Px}
\partial \tilde{f}_{\RA}(x) \; = \; \conv\{\nabla_x \mathcal{L}(x,y(x),\lambda(x),z(x,y(x),\lambda(x))) \; | \; (y(x), \lambda(x)) \in \tilde{Y}_0(x)\},
\end{equation}
where~$\tilde{Y}_0(x) = \{(\bar{y}(x), \bar{\lambda}(x)) \in \tilde{P}(x) \; | \; f_u(x,\bar{y}(x)) = \max_{(y, \lambda) \in \tilde{P}(x)}\; f_u(x,y) \}$ and~$z(x,y(x),\lambda(x))$ is the unique optimal vector of Lagrange multipliers (the uniqueness is a trivial consequence of Assumption~\ref{ass:LICQ}). One can ensure the compactness of~$\tilde{P}(x)$ from~\eqref{prop:equiv_P(x)_Ptilde(x)} by noticing that~$\Lambda$ is compact and requiring~$P(x)$ to be compact, which can be satisfied as suggested in Remark~\ref{rem:PePs}. The non-emptyness of~$\tilde{P}(x)$ (assumed in Assumption~\ref{ass:convexity} for $P(x)$) is ensured similarly; see also Remark~\ref{rem:PePs}.
Note that the gradient of the Lagrangian with respect to~$x$ is given by 
\begin{equation}\label{eq:1110}
    \nabla_x \mathcal{L}(x,y,\lambda,z) \; = \; \nabla_x f_u(x,y) + \sum_{j=1}^{q} \lambda_j \nabla_{xy}^2 f^j_\ell(x,y) z_{E}.
\end{equation} 
Equation~\eqref{eq:1110} can be used in~\eqref{eq:subdiff_Px}, where~$\tilde{Y}_0(x)$ is given by the solutions~$(\bar{y}(x),\bar{\lambda}(x))$ of problem~\eqref{prob:single_bilevel_pareto_practical}. In the stochastic case, all the gradients and Hessians on the right-hand side of~\eqref{eq:subdiff_Px}--\eqref{eq:1110} can be replaced by corresponding stochastic estimates. When using stochastic estimates, the subdifferential~\eqref{eq:subdiff_Px} is denoted as~$\partial \tilde{f}^d_{\RA}(x)$.

In Algorithm~\ref{alg:BSMG_MOLL_pess}, we introduce a bilevel stochastic subgradient method to solve the risk-averse formulation of problem~\eqref{prob:single_bilevel} given by problem~\eqref{prob:param_pessimistic_approach_P(x)}. We adopt~$\xi_k$ to denote the random variables used to obtain stochastic estimates for~UL and~LL gradients and Hessians.  

\begin{algorithm}[H]
\caption{BSG-RA Method}\label{alg:BSMG_MOLL_pess}
\begin{algorithmic}[1]
\medskip
\item[] {\bf Input:} $x_0 \in \mathbb{R}^n$, $\{\alpha_k\}_{k \geq 0} > 0$.
\medskip
\item[] {\bf For $k = 0, 1, 2, \ldots$ \bf do}
\item[] \quad\quad {\bf Step 1.} Obtain an approximation $(\tilde{y}_k, \tilde{\lambda}_k)$ to a solution~$(y(x_k), \lambda(x_k))$ of problem~\eqref{prob:single_bilevel_pareto_practical}.
\item[] \quad\quad {\bf Step 2.}
 Select a negative stochastic subgradient $d(x_k, \tilde{y}_k, \tilde{\lambda}_k, \xi_k) \in -\partial \tilde{f}^d_{\RA}(x_k)$. 
\item[] \quad\quad {\bf Step 3.} Compute $x_{k+1} = P_X ( x_{k} + \alpha_k \, d(x_k, \tilde{y}_k, \tilde{\lambda}_k, \xi_k) )$.
\nonumber
\item[] {\bf End do}
    \par\bigskip\noindent
\end{algorithmic}
\end{algorithm}

\section{Numerical experiments}\label{sec:numerical_exp}

All code was written in Python and the experimental results were obtained on a desktop computer (32GB of RAM, Intel(R) Core(TM) i9-9900K processor running at 3.60GHz).\footnote{All the code for our implementation is available at \url{https://github.com/GdKent/BMOLL_OPT_RN_RA}.}

\subsection{Our practical methods}\label{sec:practical_methods}



In the numerical experiments, we tested Algorithms~\ref{alg:BSMG_MOLL_opt}--\ref{alg:BSMG_MOLL_pess} with both exact and stochastic Hessians in deterministic and stochastic settings, respectively. The resulting algorithms are referred to as~BSG-OPT-H, BSG-RN-H, and~BSG-RA-H, where the ``H'' stands for the Hessian matrix.
To deal with the inverse matrix in~\eqref{eq:044}, one could solve the linear system given by the adjoint equation $(\sum_{j=1}^q\lambda_j\nabla_{yy}^2 f_\ell^j) \mu = \nabla_y f_u$ for the variables~$\mu$, and then calculate~$\nabla_x f_{\OPT}$ from~$\nabla_x f_u - (\nabla^2_{xy} f_{\ell}) \mu$ and~$\nabla_{\lambda} f_{\OPT}$ from~$- (\nabla^2_{\lambda y} f_{\ell}) \mu$. A similar approach could be used to handle 
the inverse term in~\eqref{adjoint_2}.
Due to the small dimensions of the problems that we tested ($n,m\leq 50$), in~BSG-OPT and~BSG-RN, the adjoint systems are solved by factorizing the matrices $\sum_{j=1}^q\lambda_j\nabla_{yy}^2 f_\ell^j$ and $\sum_{j=1}^q\lambda_j^i\nabla_{yy}^2 f_\ell^j$, respectively. In practice, when the dimensions of the problems are large, one can solve the adjoint systems via the linear conjugate gradient method until non-positive curvature is detected.
 In Step~1 of~BSG-OPT and~BSG-RN, we apply either gradient descent (in the deterministic setting) or stochastic gradient descent (in the stochastic setting) for a certain budget of iterations. The number of iterations increases by~1 every time the difference of the~UL objective function between two consecutive iterations is less than a given threshold. Such an increasing accuracy strategy has been used successfully in the~BSG method presented in~\cite{TGiovannelli_GKent_LNVicente_2022}. In Step~1 of BSG-RA, we solve problem~\eqref{prob:single_bilevel_pareto_practical} by applying the trust-region algorithm for nonlinear constrained problems proposed in~\cite{ARConn_NIMGould_PLToint_2000}. 

\subsection{Results for bilevel problems with a multi-objective lower level}\label{sec:synthetic_prob}

The set of problems that we tested are bilevel instances where the upper level is a quadratic single-objective problem and the lower level is a multi-objective problem. In particular, given~$h_1 \in \mathbb{R}^n$, $h_2 \in \mathbb{R}^m$, a symmetric positive definite matrix~$H_2 \in \mathbb{R}^{n \times n}$, and a matrix~$H_1 \in \mathbb{R}^{n \times m}$, we solve the general problem
    \begin{equation}\label{prob:synthetic_prob}
        \begin{split}
            \min_{x \in \mathbb{R}^n} ~  f_u(x, &y) \; = \; h_1^{\top} x + h_2^{\top} y + \frac{1}{2} x^{\top} H_1 y + \frac{1}{2} x^{\top} H_2 x, \\
            \text{s.t.} \ y \in &\argmin_{y \in \mathbb{R}^m} ~ F_{\ell}(x, y) = (f^1_{\ell}(x, y), f^2_{\ell}(x, y)), \\ 
        \end{split}
    \end{equation}
where the~LL objective functions considered in the experiments are specified in Table~\ref{tab:test_prob}, along with the reference for the~LL problem, the number of~UL and~LL variables (i.e.,~$n$ and~$m$, respectively), and the bounds on each~UL variable~$x_i$. The first two LL objective functions that we consider in our experiments, JOS1~\cite{YJin_MOlhofer_BSendhoff_2001} and~SP1~\cite{SHuband_etal_2006}, are both separable functions, i.e., they can be written as a sum of terms such that each variable only appears in one of the terms. As a result, the third LL objective, which we will refer to as~GKV1, leads to a more general multi-objective optimization problem that can be either separable or non-separable depending on the $H_1$, $H_2$, $H_3$, and $H_5$ matrices that are chosen.

\begin{table}
    \small
    \centering
    \begin{tabular}{ c|c|c|c|c|c } 
    Problem & $n$ & $m$ & Ref. for LL & LL Objective Functions & Bound on~$x_i$\\[2pt]
     \hline
    1 & $\bar{n}$ & $\bar{n}$ & JOS1~\cite{YJin_MOlhofer_BSendhoff_2001} & $f^1_{\ell}(x, y) = \frac{1}{\bar{n}}\sum_{i=1}^{\bar{n}} x_i^2 y_i^2$ & $[-2,\infty]$\\
     &  &  & &  $f^2_{\ell}(x, y) = \frac{1}{\bar{n}}\sum_{i=1}^{\bar{n}} (x_i - 2)^2(y_i - 2)^2$ &\\[2pt]
     \hline
    2 & $\bar{n}$ & $\bar{n}$ & SP1~\cite{SHuband_etal_2006} & $f^1_{\ell}(x, y) = \sum_{i=1}^{\bar{n}} [(x_i - 1)^2 + (x_i - y_i)^2]$ & $[-2,3]$\\
     &  &  & &  $f^2_{\ell}(x, y) = \sum_{i=1}^{\bar{n}} [(y_i - 3)^2 + (x_i - y_i)^2]$ &\\[2pt]
     \hline
    3 & $\bar{n}$ & $\bar{n}$ & GKV1 & $f^1_{\ell}(x, y) = \frac{1}{2} y^{\top} H_3 y - \frac{1}{2} y^{\top} H_4 x$ & $[-\infty,0]$ or $[0,\infty]$ \\
     &  &  & &  $f^2_{\ell}(x, y) = \frac{1}{2} y^{\top} H_5 y + \frac{1}{2} y^{\top} H_6 x$ &\\
    \end{tabular}
    \caption{Test problems ($\bar{n}$ is an arbitrary positive scalar).}\label{tab:test_prob}
\end{table}

In all the numerical experiments, we considered the same dimension at both the upper and lower levels (i.e.,~$n = m = \bar{n}$, with~$\bar{n}$ positive scalar) and we set~$H_1$ and~$H_2$ in~\eqref{prob:synthetic_prob} equal to identity matrices. The initial points were randomly generated according to a uniform distribution defined within the bounds specified in the last column of Table~\ref{tab:test_prob}. We compared all the algorithms by using either a line search~(LS) or a fixed stepsize~(FS) at both the~UL and~LL problems. We also considered a decaying stepsize but this led to worse performance and, therefore, we do not report the corresponding results. 
Recalling the set~$\Lambda_{\RN}$ introduced in Section~\ref{sec:adj_grad_LL_multiobj_prob}, when running~BSG-RN on problems with dimension~$\bar{n} > 1$, we use~$N = 500$ and~$Q = 20$ (see Figure~\ref{fig:GKV1_nonsep_BSGRN} for a comparison of the results obtained for different values of~$Q$). When~$\bar{n} = 1$, we use~$N = Q = 500$. For~BSG-OPT and~BSG-RN, we implemented an increasing accuracy strategy for the~LL problem by using~$f_u$ difference thresholds of~$0.1$ and~$0.9$, respectively, and a maximum number of LL iterations equal to~$30$. Note that one could also consider an increasing accuracy strategy for~BSG-RA to gradually improve the approximation of the solution of problem~\eqref{prob:single_bilevel_pareto_practical} obtained in Step~1 of Algorithm~\ref{alg:BSMG_MOLL_pess}. In this paper, for the practical algorithm considered for BSG-RA, we solve problem~\eqref{prob:single_bilevel_pareto_practical} by using the version of the trust-region method developed in~\cite{ARConn_NIMGould_PLToint_2000} available in the SciPy library~\cite{2020SciPy-NMeth}, with default parameters. In the figures, when comparing the algorithms in terms of iterations, we plot the true function values~$f_{\OPT}$ and $f_{\RN}$ for~BSG-OPT and~BSG-RN and an accurate approximation of the true function~$f_{\RN}$ for BSG-RA. 

We considered three sets of experiments corresponding to three different settings for the~LL problem: deterministic separable case, deterministic non-separable case, and stochastic non-separable case. In the latter case, the~UL problem is considered stochastic as well.

    \paragraph{Deterministic separable~LL case.} In this case, we consider LL~objective functions that are separable, i.e., all the problems from Table~\ref{tab:test_prob}. 
    In Problem~3, we set~$H_3$, $H_4$, $H_5$, and~$H_6$ equal to identity matrices and we consider the bounds on~$x_i$ given by~$[-\infty,0]$. In~\eqref{prob:synthetic_prob}, we set~$h_1$ and~$h_2$ equal to vectors of ones (except for Problem~3, where each element of~$h_1$ is equal to~$3$). For the problems in this case, we consider~$\Bar{n}=1$, which allows us to visualize the solution space in two dimensions for a more direct interpretation of the results.
    
    Figures~\ref{fig:JOS1}--\ref{fig:GKV1} show the results obtained by Algorithms~\ref{alg:BSMG_MOLL_opt}--\ref{alg:BSMG_MOLL_pess} when a backtracking Armijo line search~\cite{JNocedal_SJWright_2006} at both the~UL and~LL problems and exact Hessians are used. The~UL line search ensures a sufficient decrease of an accurate approximation of the true functions~$f_{\OPT}$, $f_{\RN}$, and $f_{\RA}$. We denote the~UL optimal solutions found by~BSG-OPT, BSG-RN, and BSG-RA as~$x_{\OPT}$, $x_{\RN}$, and $x_{\RA}$, respectively. Moreover, we denote the optimal solutions of the problems~$\min_{y \in P(x_{\OPT})} f_u(x_{\OPT},y)$ and~$\max_{y \in P(x_{\RA})} f_u(x_{\RA},y)$ as~$y_{\OPT}$ and~$y_{\RA}$, respectively. In each of these figures, in the upper left-hand plot, we compare the values of the true functions~$f_{\OPT}$, $f_{\RN}$, and $f_{\RA}$  achieved by each algorithm in terms of iterations. In the upper right-hand plot, we compare the sets~$\{(x^*,y)~|~y \in P(x)\}$, where~$x^*$ denotes the~UL optimal solution determined by each algorithm (i.e., $x^* \in \{x_{\OPT}, x_{\RN}, x_{\RA}\}$), and we also report the contour lines of the~UL objective function. In the lower plots, we compare the Pareto fronts between the~LL objective functions obtained for each~UL optimal solution in~$\{x_{\OPT}, x_{\RN}, x_{\RA}\}$, and we refer to the points~$(f_{\ell}^1, f_{\ell}^2)$ evaluated at~$(x_{\OPT}, y_{\OPT})$ and~$(x_{\RA}, y_{\RA})$ as the optimistic and pessimistic Pareto points, respectively. Note that the optimistic Pareto front {\it dominates} both the risk-neutral and risk-averse Pareto fronts in all the figures, although the three fronts correspond to different UL variable values.
    We point out that all the algorithms were able to find the optimal solutions to Problems~1--3.

    \begin{figure}
    \centering
        \includegraphics[scale=0.38]{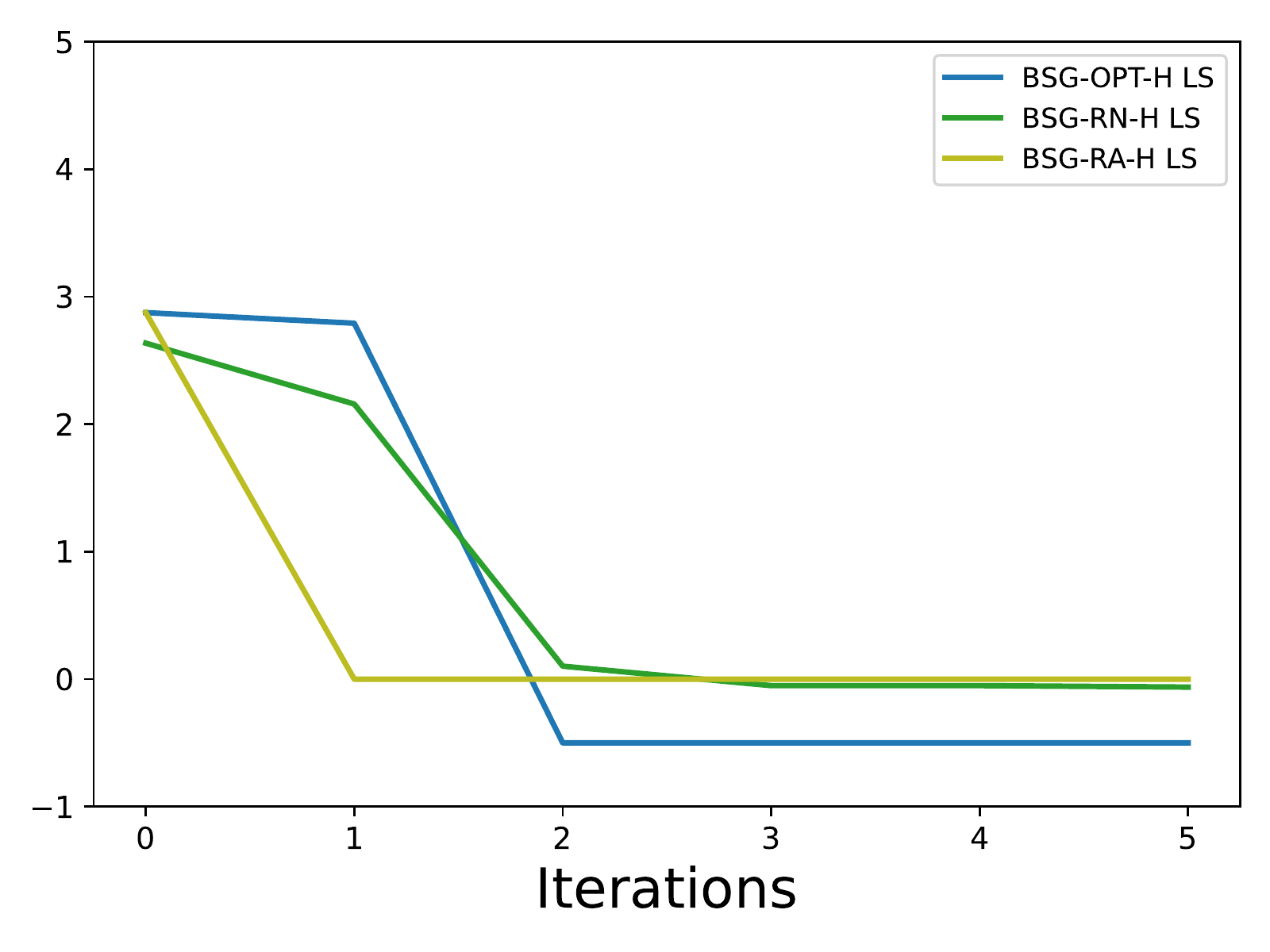}
          \includegraphics[scale=0.38]{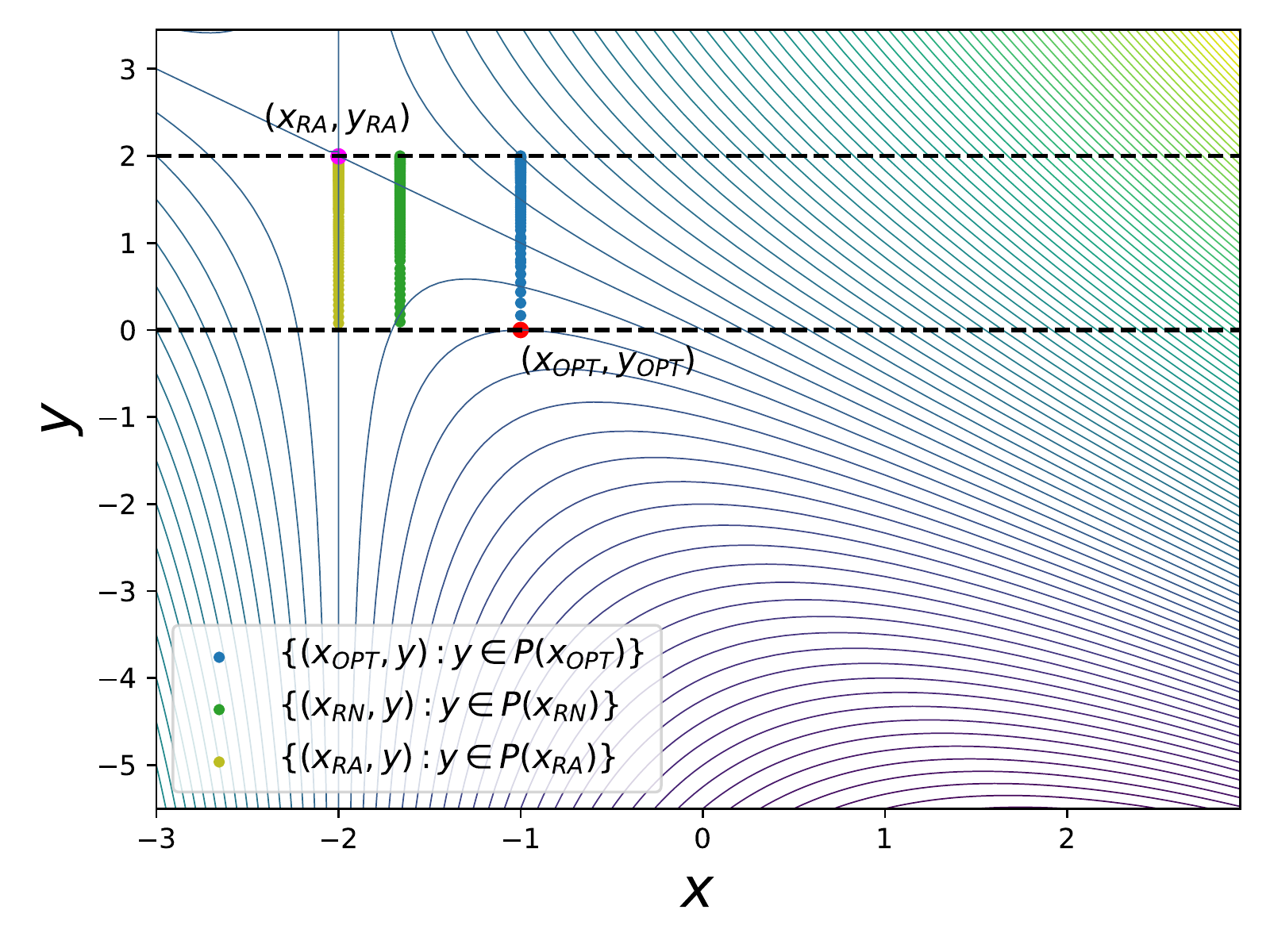} \includegraphics[scale=0.38]{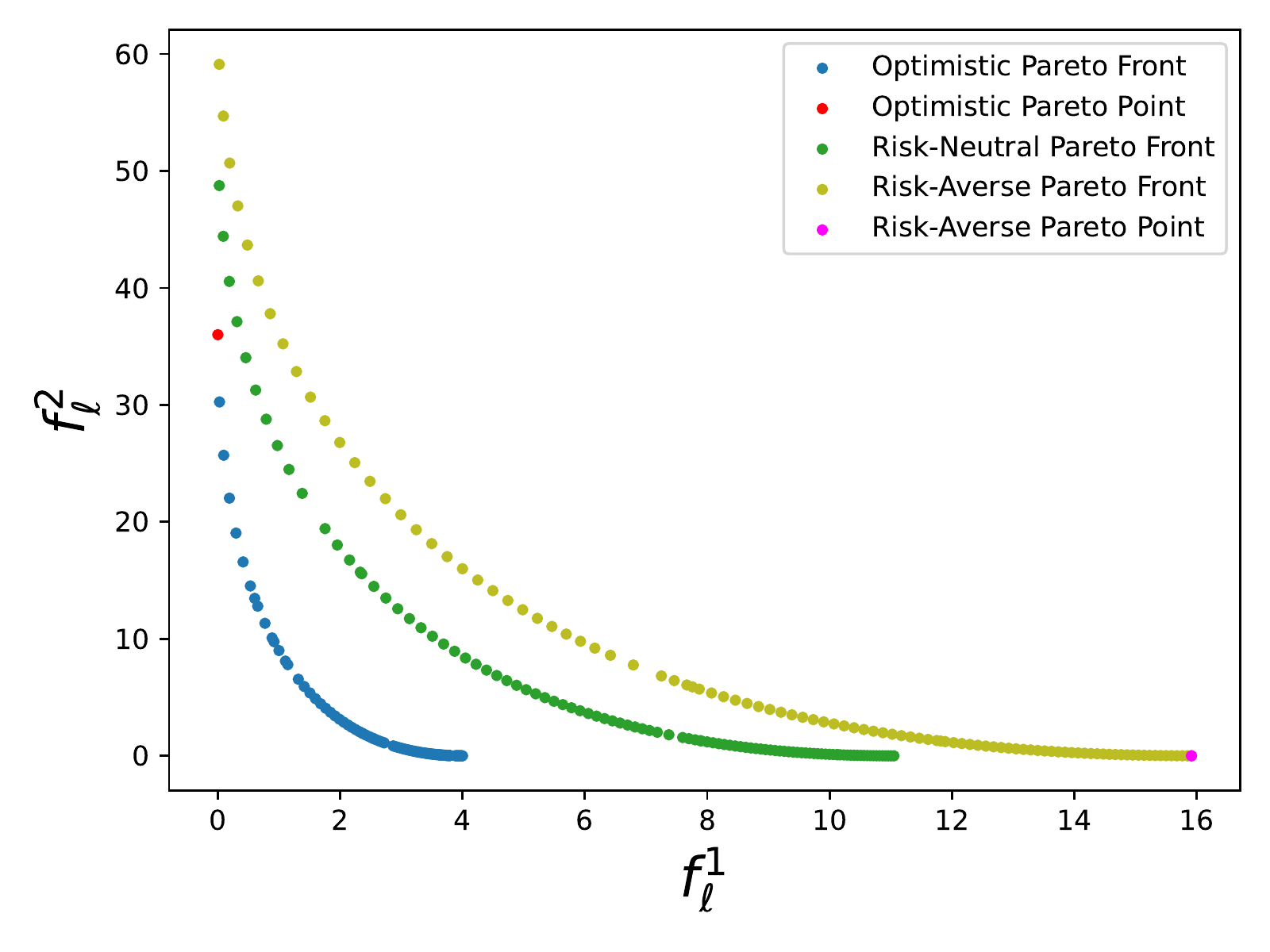} 
        \caption{Results for Problem~1 in the deterministic separable~LL case (for~$\bar{n}$ in Table~\ref{tab:test_prob} equal to~1). In the upper-left plot, the vertical axis represents the values of $f_{\OPT}$, $f_{\RN}$, and $f_{\RA}$.}\label{fig:JOS1}
    \end{figure}

    \begin{figure}
    \centering
        \includegraphics[scale=0.38]{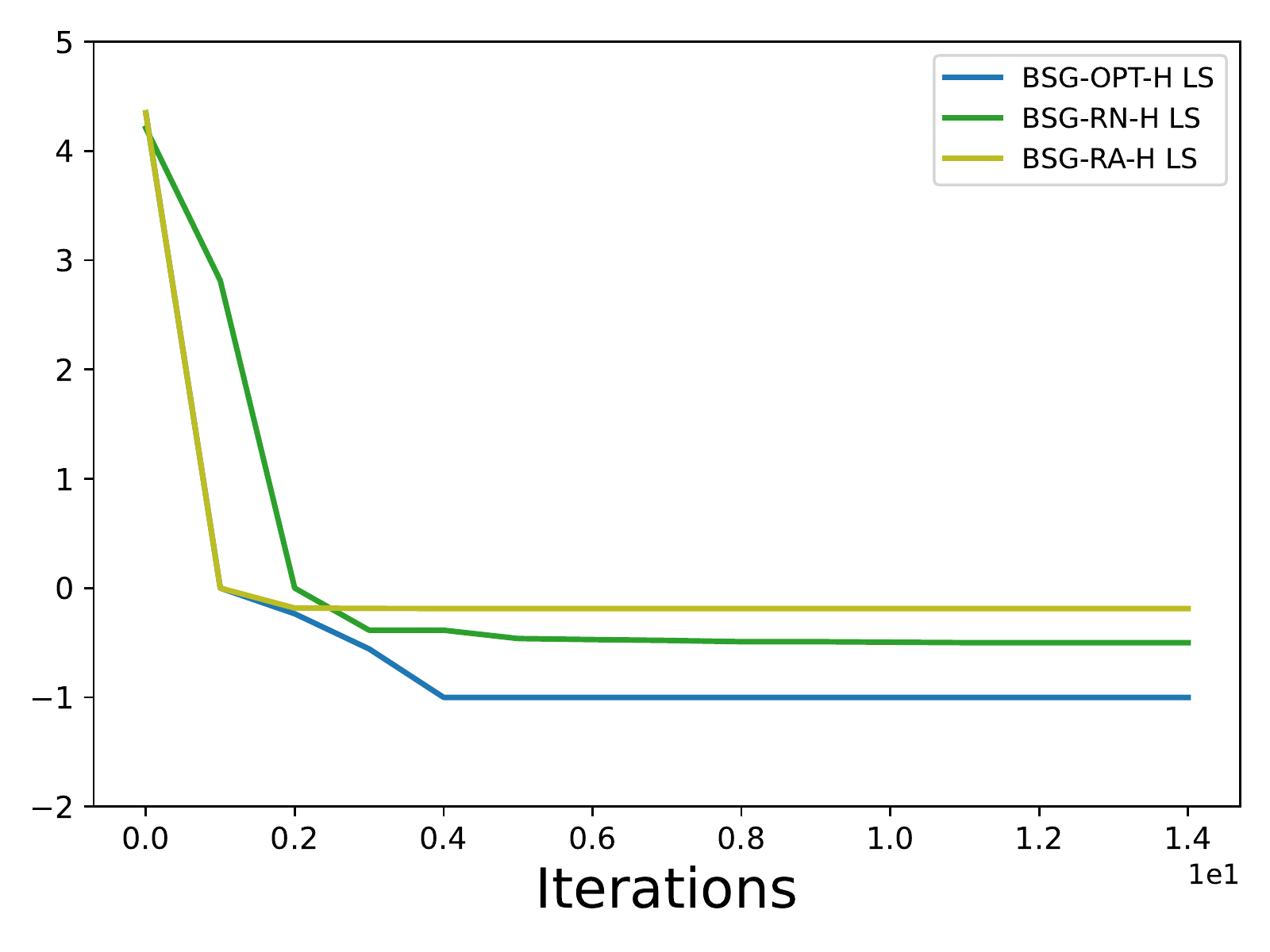}
          \includegraphics[scale=0.38]{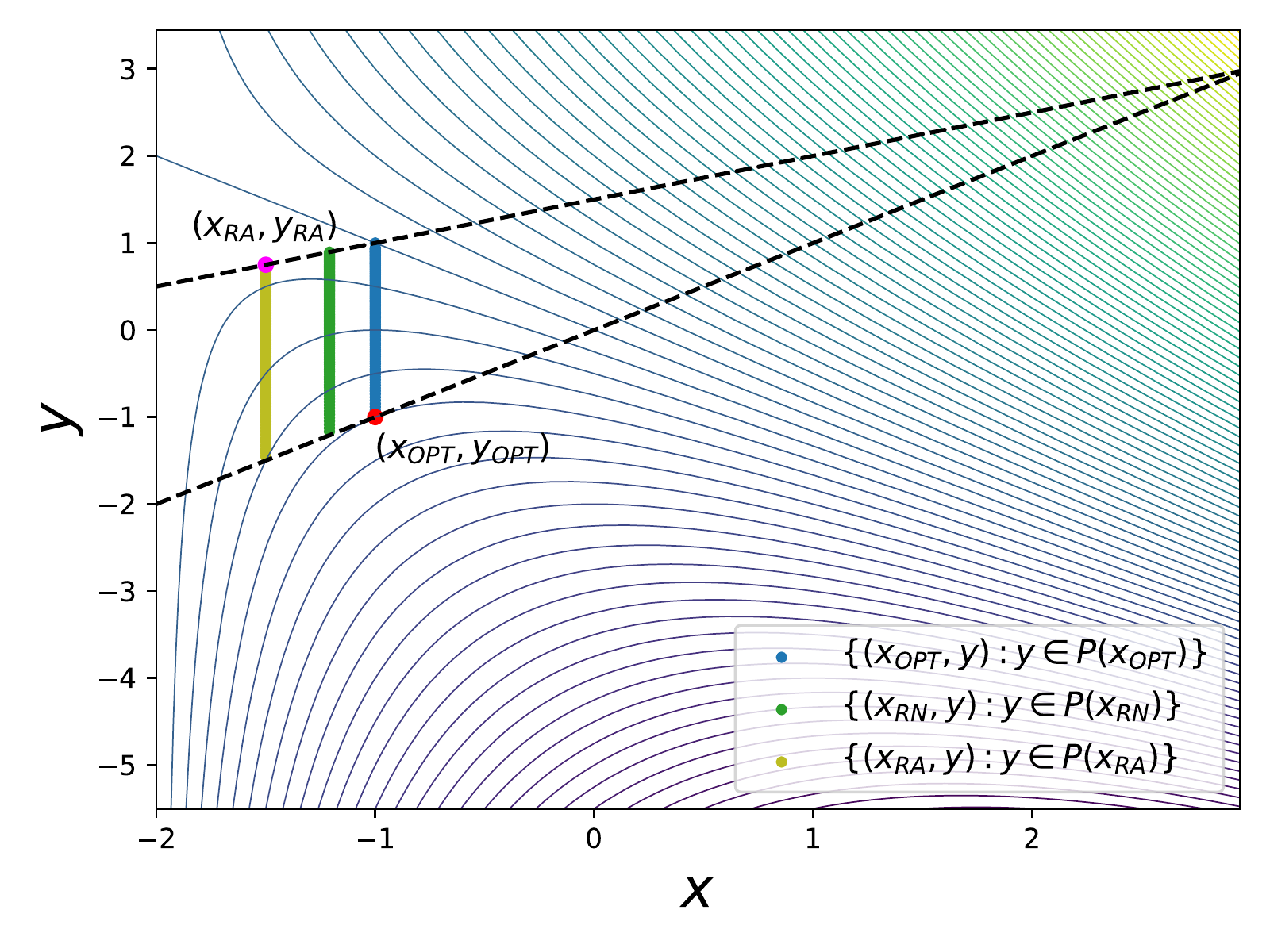} \includegraphics[scale=0.38]{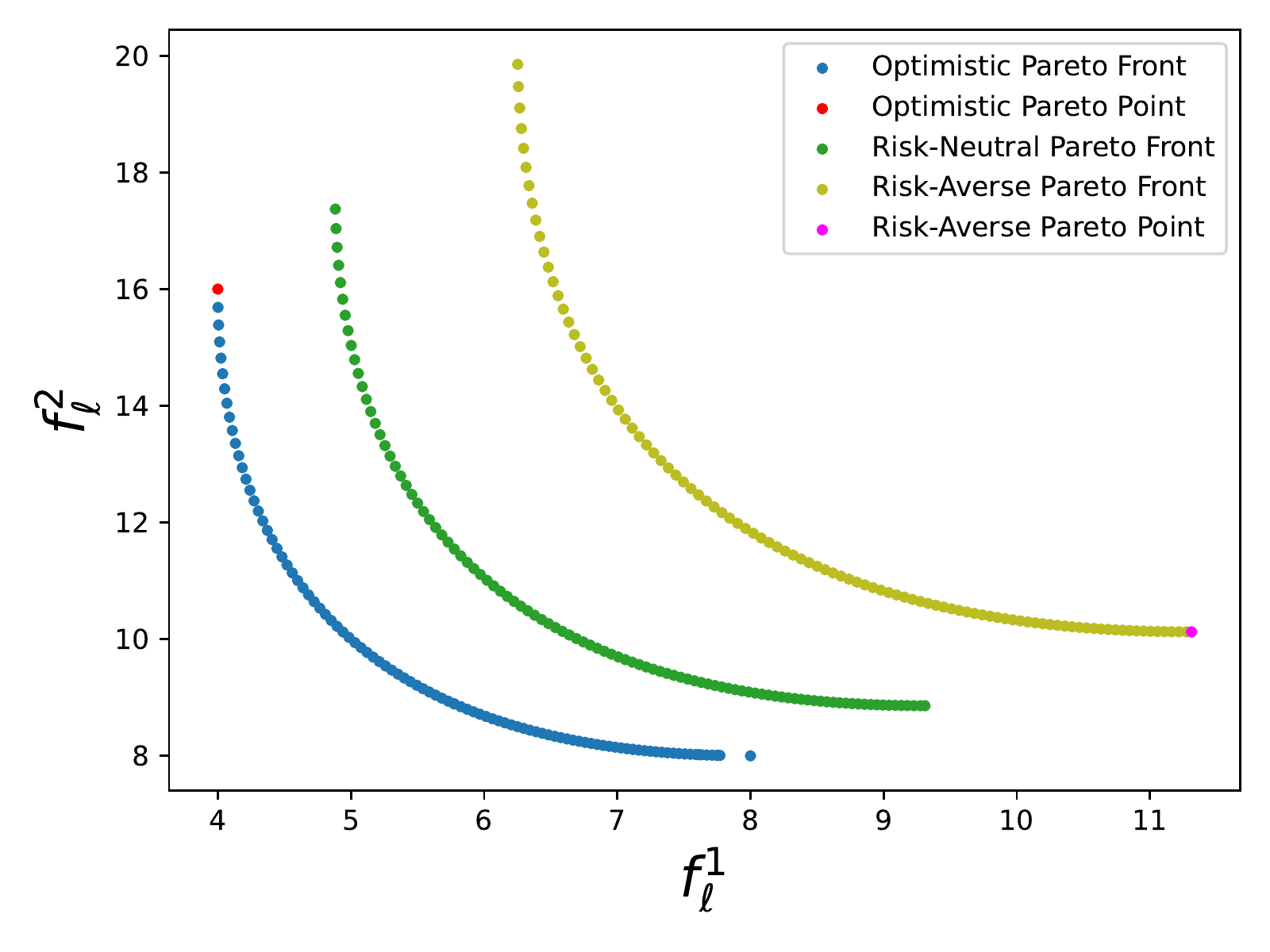} 
        \caption{Results for Problem~2 in the deterministic separable~LL case (for~$\bar{n}$ in Table~\ref{tab:test_prob} equal to~1). In the upper-left plot, the vertical axis represents the values of $f_{\OPT}$, $f_{\RN}$, and $f_{\RA}$.}\label{fig:SP1}
    \end{figure}

    \begin{figure}
    \centering
          \includegraphics[scale=0.38]{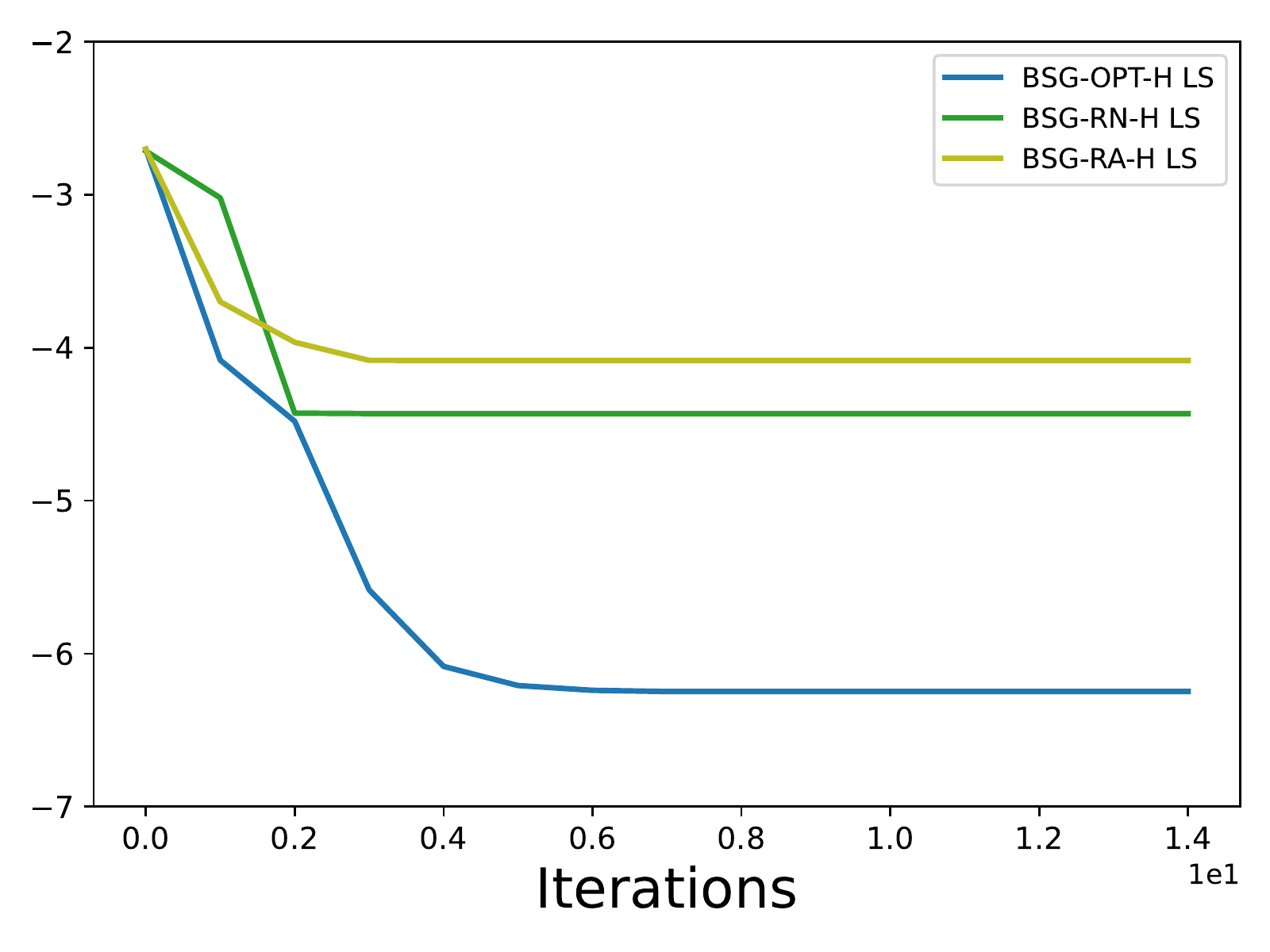}
          \includegraphics[scale=0.38]{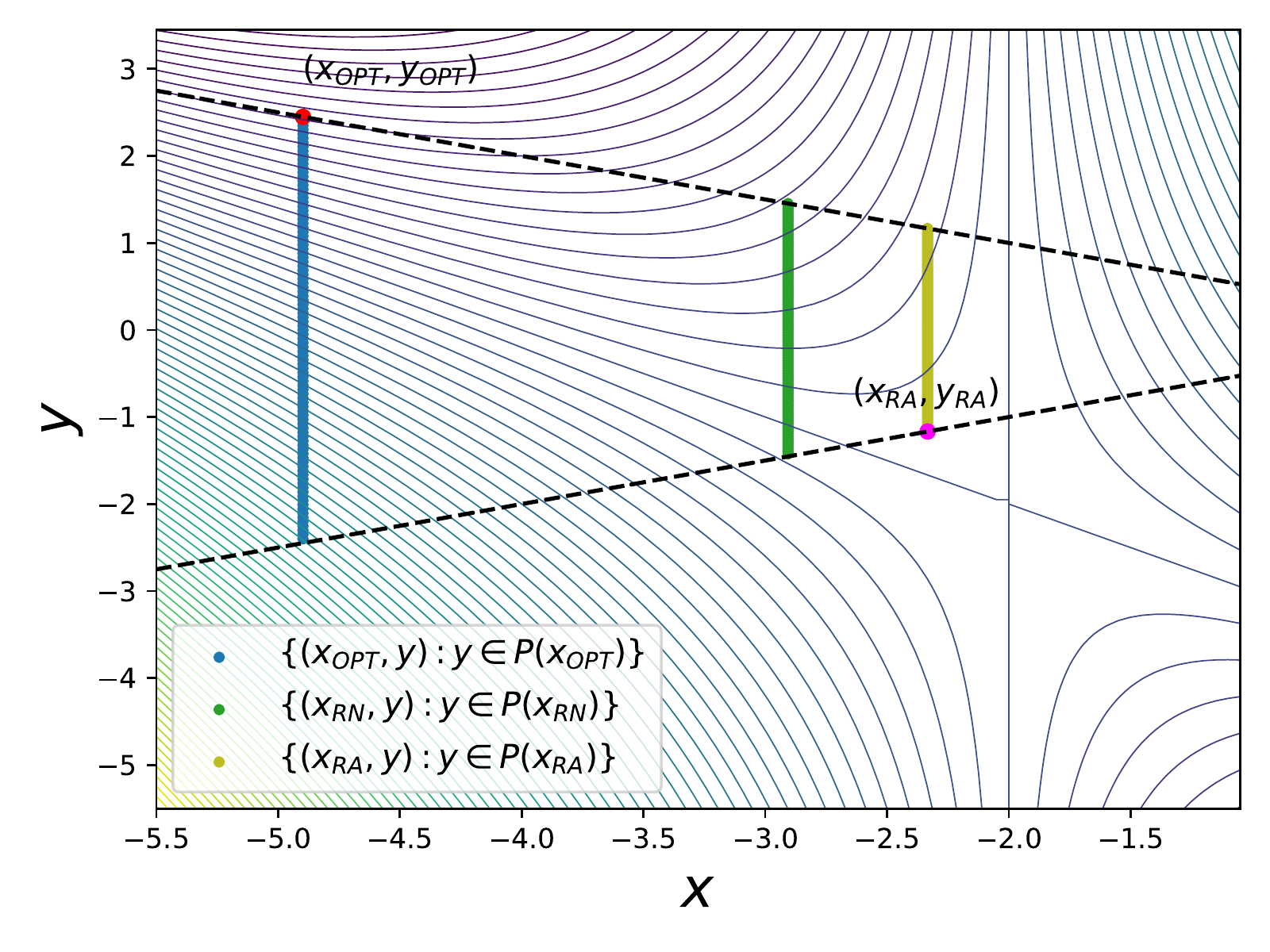} \includegraphics[scale=0.38]{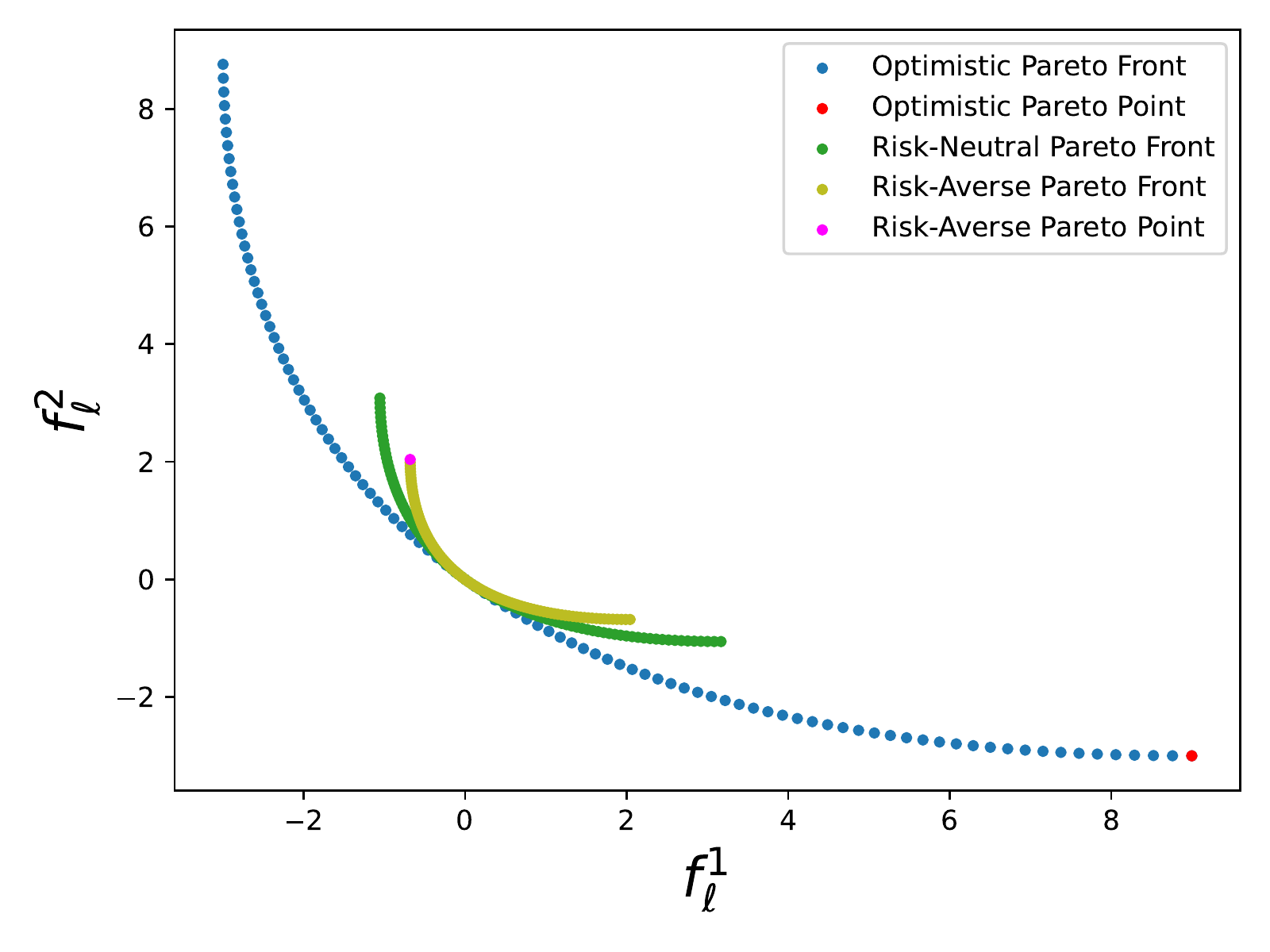} 
        \caption{Results for Problem~3 in the deterministic separable~LL case (for~$\bar{n}$ in Table~\ref{tab:test_prob} equal to~1). In the upper-left plot, the vertical axis represents the values of $f_{\OPT}$, $f_{\RN}$, and $f_{\RA}$.} \label{fig:GKV1}
    \end{figure}

 \paragraph{Deterministic non-separable~LL case.} In this case, we consider Problem~3 from Table~\ref{tab:test_prob} with~$\bar{n} = 50$, $H_3$ and~$H_5$ equal to randomly generated symmetric positive definite matrices, $H_4$ and~$H_6$ equal to identity matrices, and bounds on~$x_i$ given by~$[0,\infty]$. 
 In~\eqref{prob:synthetic_prob}, the components of the vectors~$h_1$ and~$h_2$ have been randomly generated according to a uniform distribution between~$-5$ and~0 and between~$-3$ and~0, respectively. The results obtained are shown in Figure~\ref{fig:GKV1_nonsep}, where the relative positions of the curves are consistent with the ones in Figures~\ref{fig:JOS1}--\ref{fig:GKV1}. Figure~\ref{fig:GKV1_nonsep_BSGRN} shows the results when~BSG-RN is run with~$N = 500$ and~$Q \in \{10,20,40,500\}$ in terms of iterations and time. The results in Figure~\ref{fig:GKV1_nonsep_BSGRN} were also obtained by computing the 95\% confidence intervals produced over 10 randomly generated starting points. Note that the starting point does not seem to have an impact on the convergence of the algorithms here. Further, one can also see that randomly sampling a set of~$Q$ samples from~$\Lambda_{\RN}$ leads to the same optimal function value as using the entire set~$\Lambda_{\RN}$ and, therefore, confirms the validity of the approach. 

    \begin{figure}
    \centering
          \includegraphics[scale=0.4]{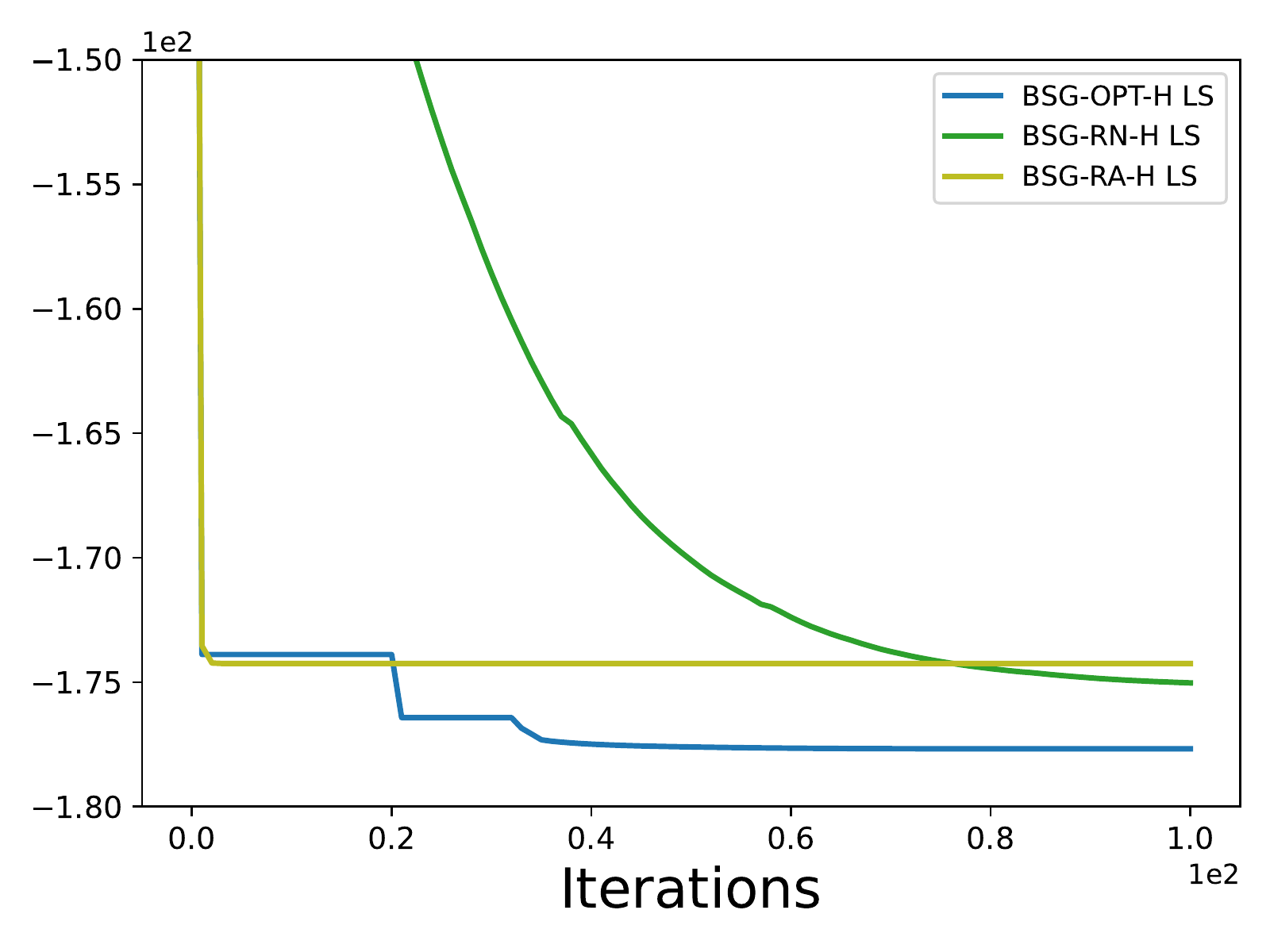} 
          \includegraphics[scale=0.4]{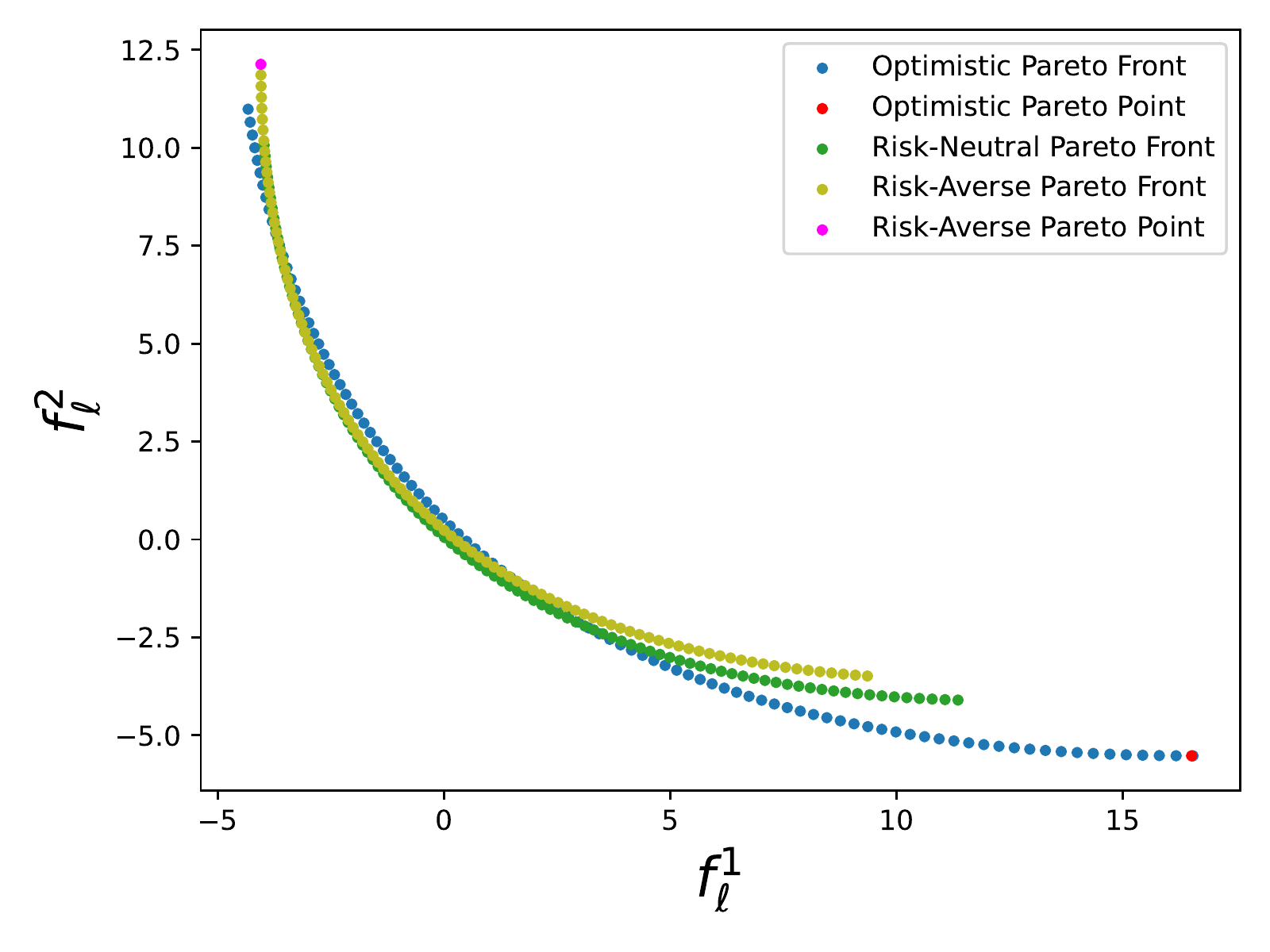} 
        \caption{Results for Problem~3 in the deterministic non-separable~LL case (for~$\bar{n}$ in Table~\ref{tab:test_prob} equal to~50). In the left plot, the vertical axis represents the values of $f_{\OPT}$, $f_{\RN}$, and $f_{\RA}$.}\label{fig:GKV1_nonsep}
    \end{figure}

    \begin{figure}
    \centering
          \includegraphics[scale=0.4]{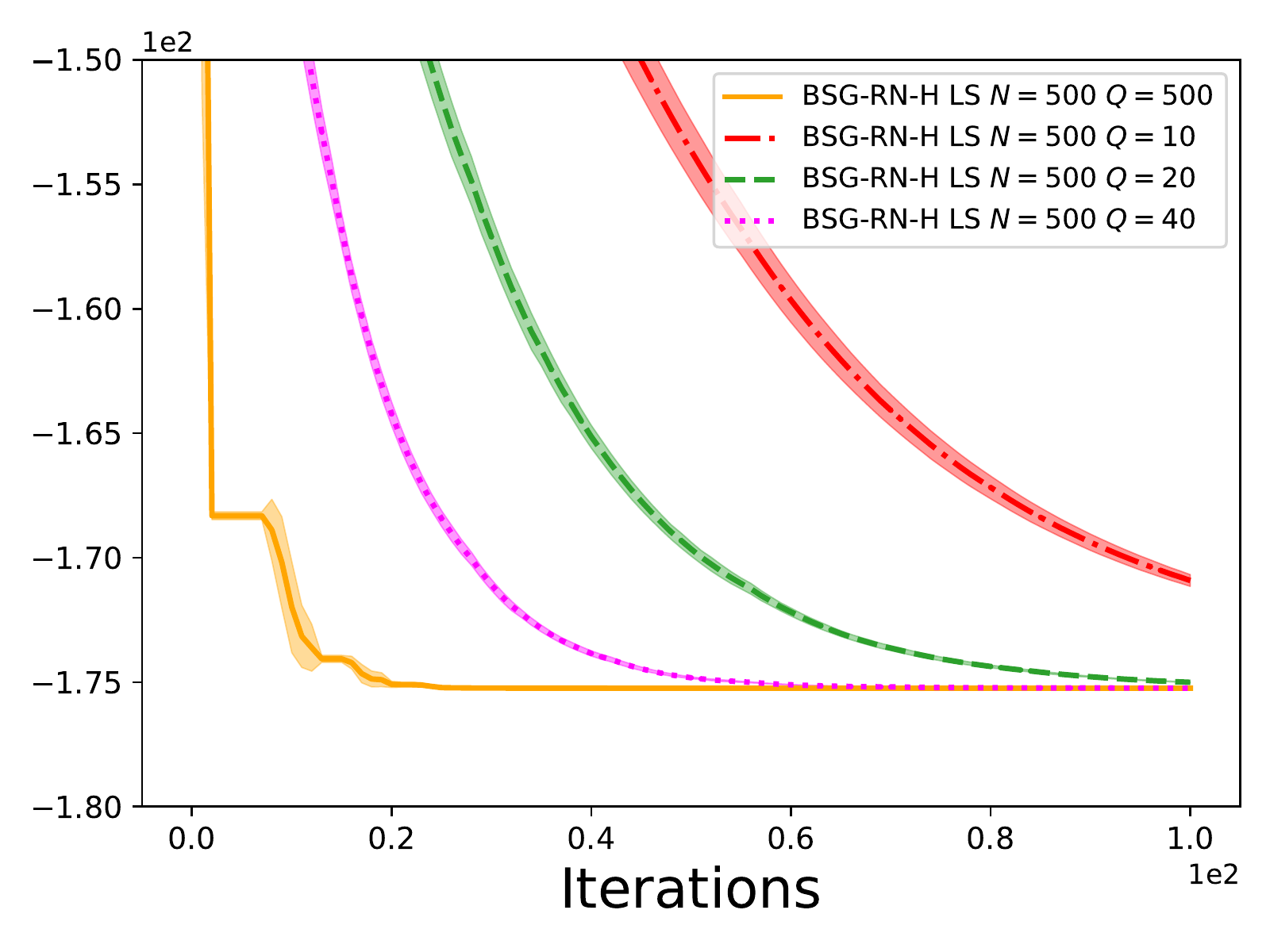} 
          \includegraphics[scale=0.4]{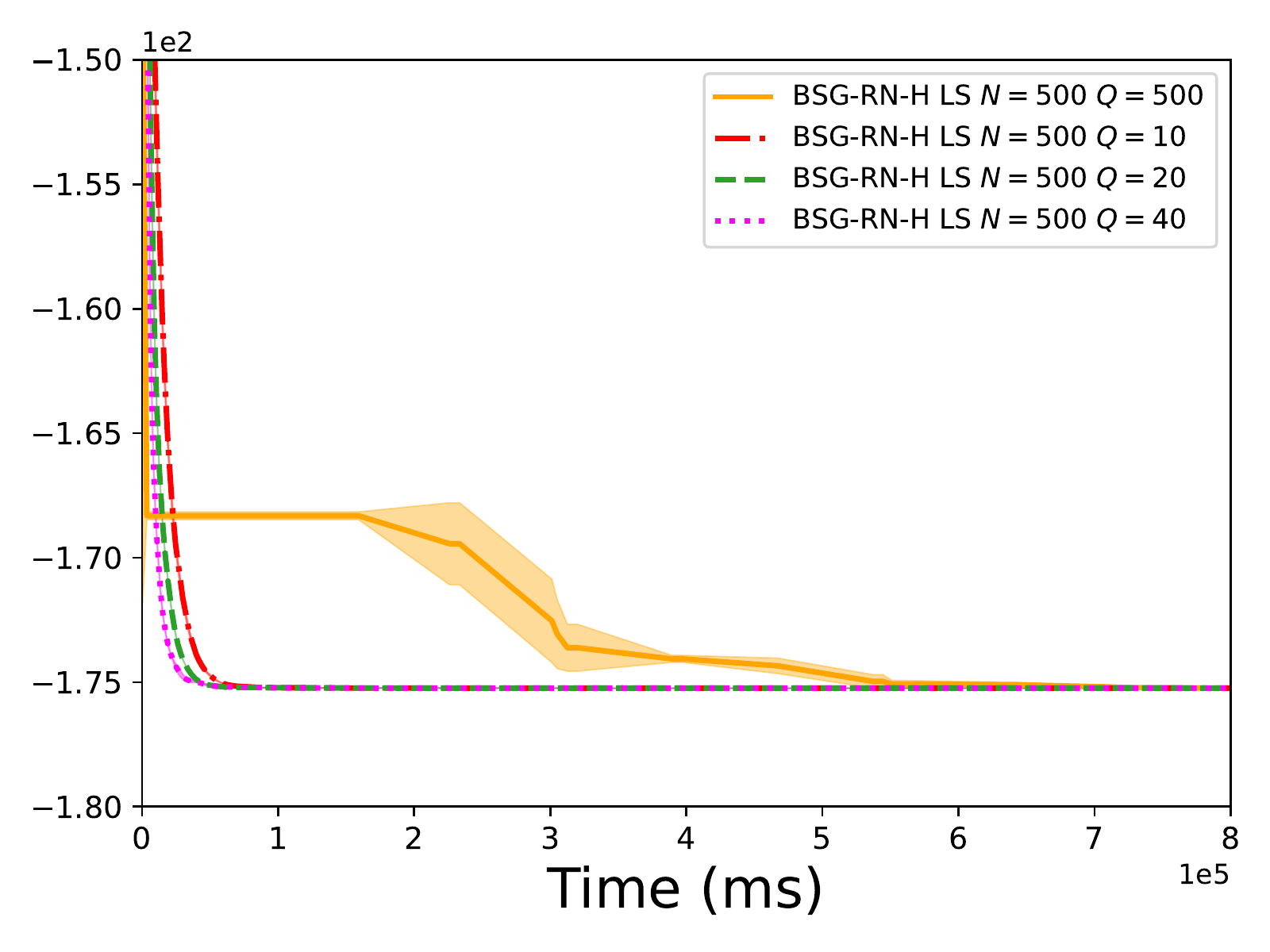} 
        \caption{Results for Problem~3 in the deterministic non-separable~LL case when running~BSG-RN with different values of~$Q$ (for~$\bar{n}$ in Table~\ref{tab:test_prob} equal to~50). In both plots, the vertical axis represents the values of $f_{\OPT}$, $f_{\RN}$, and $f_{\RA}$.}\label{fig:GKV1_nonsep_BSGRN}
    \end{figure}
 
 \paragraph{Stochastic non-separable~LL case.} Note that problem~\eqref{prob:synthetic_prob} is deterministic. To investigate the numerical performance of the stochastic methods considered in the experiments, we compute stochastic gradient and Hessian estimates by adding Gaussian noise with mean~0 to each corresponding deterministic gradient (i.e.,~$\nabla_x f_u$, $\nabla_y f_u$, $\nabla_x f_{\ell}$, $\nabla_y f_{\ell}$) and Hessian (i.e.,~$\nabla^2_{xy} f_{\ell}$, $\nabla^2_{yy} f_{\ell}$). 
 The values of the noise standard deviation were chosen from the set~$\{0,1,2\}$ for the gradients and~$\{0,0.1,0.2\}$ for the Hessians. It is well known that Hessians require larger estimation batches than gradients when considering stochastic settings~\cite[Section 6.1.1]{LBottou_FECurtis_JNocedal_2018}. We compared all the algorithms by using the best~UL and~LL stepsizes found for each of them, which were~$0.1$ for the~UL (1 for BSG-RN) and~$0.001$ for the~LL. Such stepsizes were obtained by performing a grid search over the set~$\{1,0.1,0.01,0.001\}$ for the~UL and the set~$\{0.01,0.001,0.0001\}$ for the~LL. We averaged all the results over 10 trials by using different random seeds and displayed the corresponding 95\% confidence intervals. Figure~\ref{fig:GKV1_nonsep_stoch} shows that as the value of the standard deviation increases, the performance of all the algorithms gets worse. Further, one can also see that~BSG-RN exhibits higher robustness to the noise than~BSG-OPT and~BSG-RA.   

    \begin{figure}
    \centering
          \includegraphics[scale=0.38]{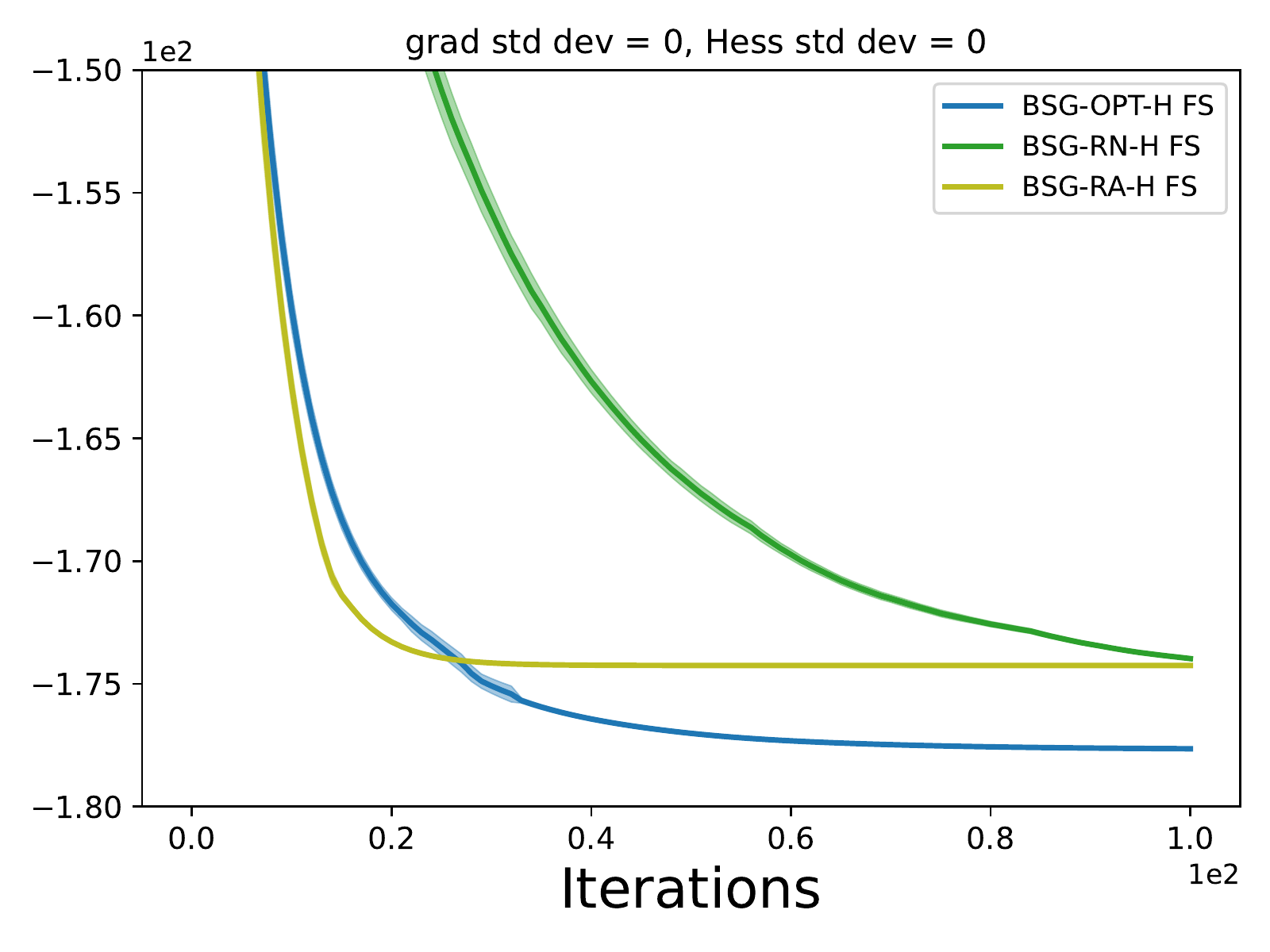} 
          \includegraphics[scale=0.38]{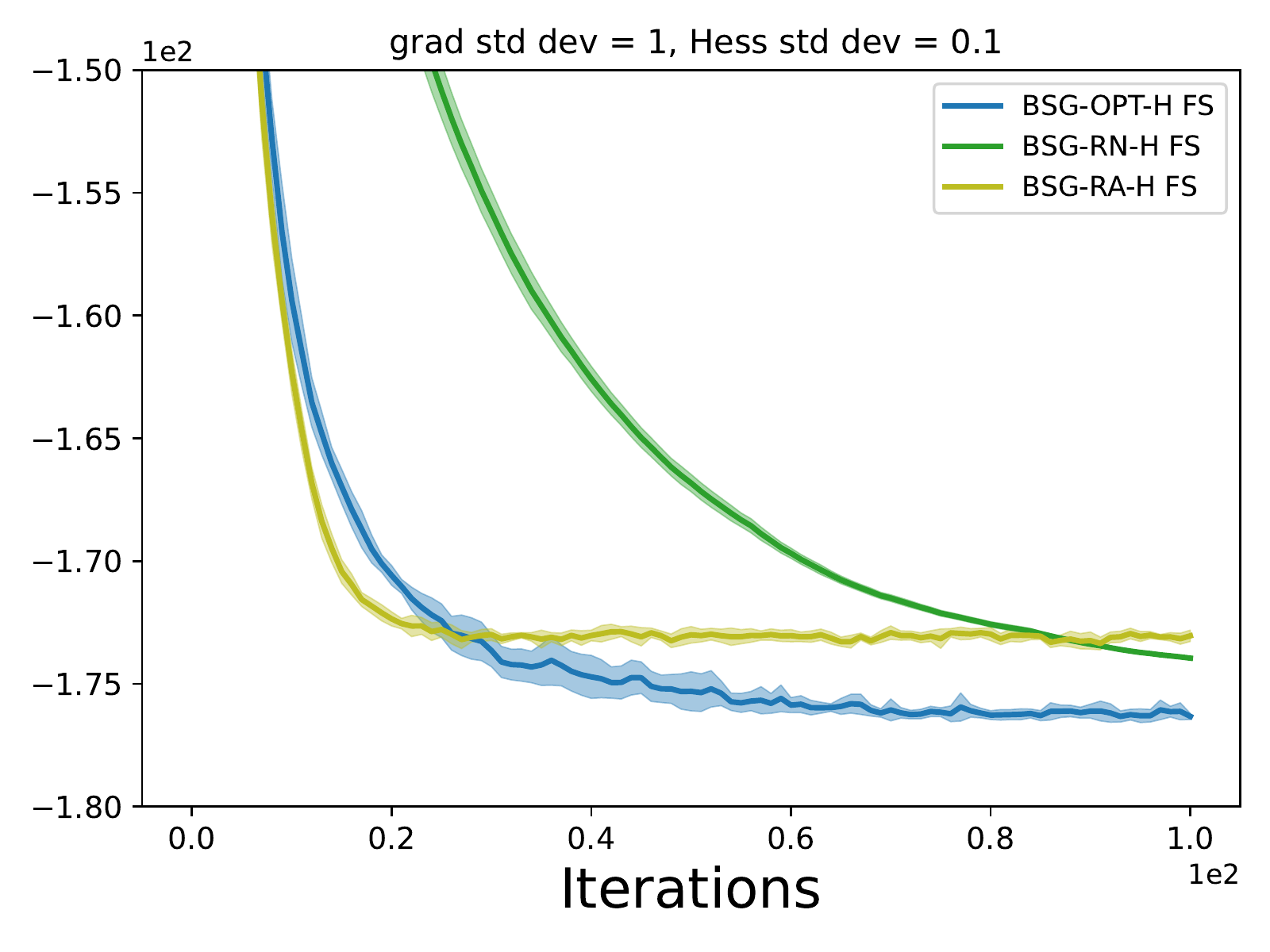} 
          \includegraphics[scale=0.38]{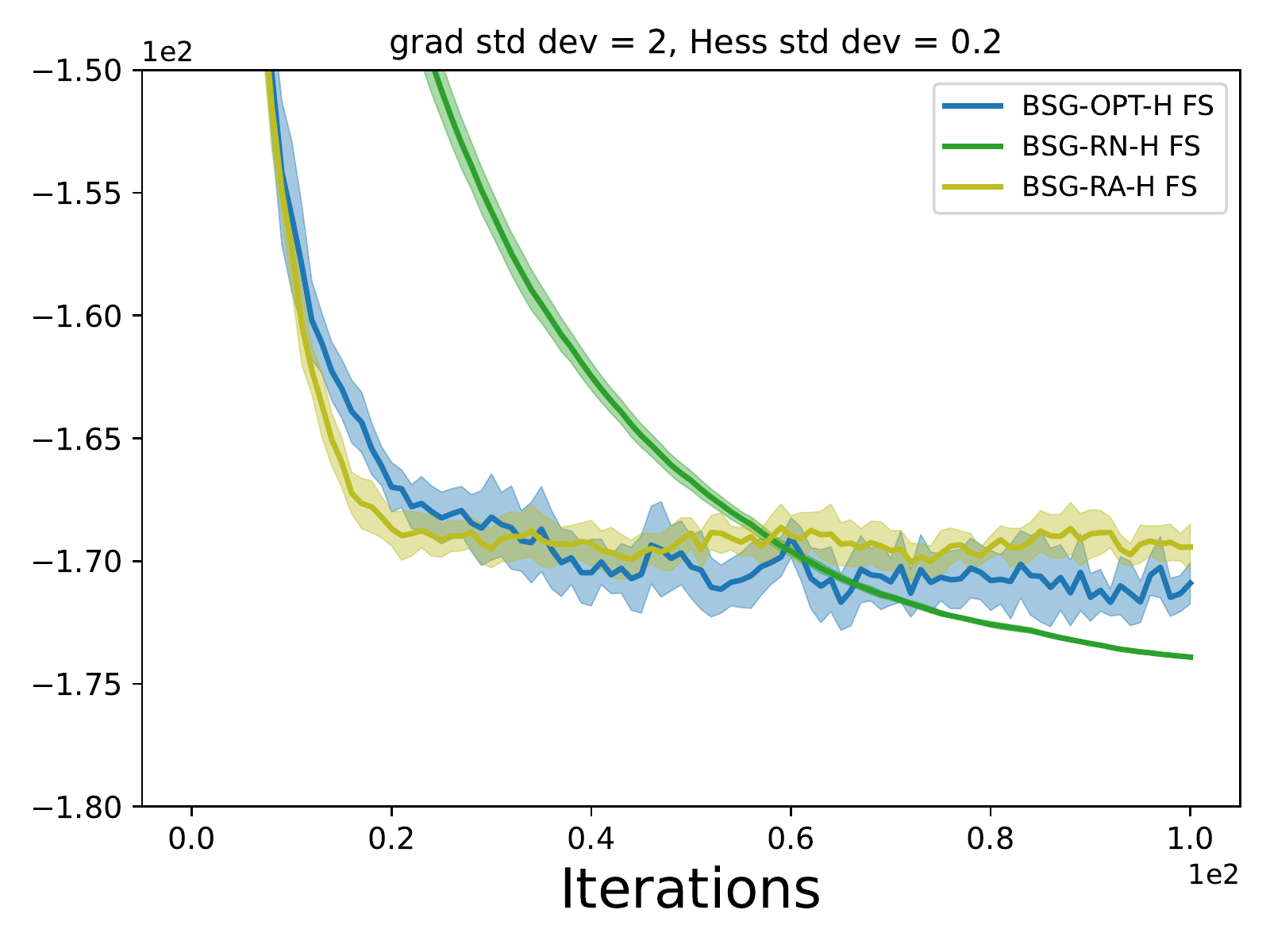}  
        \caption{Results for Problem~3 in the stochastic non-separable~LL case for different standard deviation values of the stochastic gradient and Hessian estimates (for~$\bar{n}$ in Table~\ref{tab:test_prob} equal to~50). The vertical axis represents the values of $f_{\OPT}$, $f_{\RN}$, and $f_{\RA}$.}\label{fig:GKV1_nonsep_stoch}
    \end{figure}

\section{Concluding remarks}\label{sec:UL_multiobj_prob}

In this paper we focused on~BMOLL problems (upper-level single-objective and lower-level multi-objective) for which we developed new risk-neutral and risk-averse formulations for both the deterministic and  stochastic cases and extended the application of the optimistic formulation to the stochastic case. We also developed corresponding gradient-based algorithms.

Other problems that can be considered in bilevel multi-objective optimization are ones with multiple objective functions at the~UL problem or at both the~UL and~LL problems. To address such cases, one can assemble together known approaches from multi-objective and bilevel optimization, as mentioned in Subsections~\ref{sec:adj_grad_UL_multiobj_prob}--\ref{sec:conflicting_UL_LL_obj_funct} below. 

\subsection{The multi-objective single-objective case}\label{sec:adj_grad_UL_multiobj_prob}

To develop a gradient-based algorithm for problem~\eqref{prob:bilevel_multiobj} with~$p \ge 1$ and~$q = 1$, one can consider the equivalent formulation
\begin{equation}\label{prob:bilevel_multiobj_upper}
	\begin{split}
	\min_{x \in \mathbb{R}^n} ~~ & F_u(x,y(x)) \, \text{ s.t. } \, x \in X,
	\end{split}
\end{equation}
where~$y(x)$ denotes the solution of the~LL problem. Note that this is essentially a multi-objective problem where the variables~$y$ are given by~$y(x)$.
Therefore, one can draw inspiration from the multi-gradient method developed for deterministic and stochastic multi-objective optimization, for which one knows how to calculate steepest descent directions (i.e., multi-gradients)~\cite{LGDrummond_BFSvaiter_2005,JFliege_BFSvaiter_2000} and stochastic multi-gradients~\cite{JADesideri_2012,SLiu_LNVicente_2019}. In~(\ref{prob:bilevel_multiobj_upper}), the resulting multi-gradients are given by {\it adjoint multi-gradients}.
Denoting the current iterate as $x$ and the adjoint gradients of the individual~UL objective functions $f^i(x)=f^i_u(x,y(x))$ as $\nabla f^i(x)$, with~$i \in \{1, \ldots, p\}$, the adjoint multi-gradient can be obtained by first solving the QP subproblem
\begin{equation}\label{prob:multigrad}
    \begin{aligned}
     \min_{\delta\in\mathbb{R}^p} ~~ & \left\|\sum_{i=1}^{p} \delta_{i} \nabla f^i(x)\right\|^{2} \quad \text{ s.t. } \quad \delta \in \Delta,
    \end{aligned}
\end{equation}
where~$\Delta = \{\delta \in \mathbb{R}^p : \sum_{i=1}^{p} \delta_i = 1, \delta_i \ge 0 \ \forall i \in \{1, \ldots, p\} \}$ denotes the simplex set (note that~$\Delta$ is a subset of~$\mathbb{R}^p$ and, therefore, is a different simplex set from~$\Lambda$ introduced in~\eqref{eq:simplex_set}, which is a subset of~$\mathbb{R}^q$). Then, the negative adjoint multi-gradient is given by $- \sum_{i=1}^{p} \delta_i^* \nabla f^i(x)$, where $\delta_i^*$ is the optimal solution of problem~\eqref{prob:multigrad}. In the stochastic case, all the gradients and Hessians in the $p$ adjoint gradients used in problem~\eqref{prob:multigrad} are replaced by their corresponding stochastic estimates. 

\subsection{The multi-objective multi-objective case}\label{sec:conflicting_UL_LL_obj_funct}

To address problem~\eqref{prob:bilevel_multiobj} with~$p \ge 1$ and~$q \ge 1$, one can introduce optimistic, risk-neutral, and risk-averse formulations by following similar approaches to the ones described in Sections~\ref{sec:LL_multiobj_prob}--\ref{sec:adj_grad_LL_multiobj_prob_general}. In particular, one can use an adjoint multi-gradient method to solve the~UL problem (see Subsection~\ref{sec:adj_grad_UL_multiobj_prob}) and consider optimistic, risk-neutral, and risk-averse formulations to address the~LL problem in the general case~$P(x)$. In the optimistic and risk-neutral cases, the resulting algorithms differ from each other in terms of the gradients~$\nabla f^i(x)$ used in the~QP subproblem (which is~\eqref{prob:multigrad} in the~LL single-objective case), for all~$i \in \{1,\ldots,p\}$. More specifically, in the optimistic case, the vector of weights~$\lambda \in \mathbb{R}^q$, which is associated with the~LL objective functions, is included among the~UL variables, and the adjoint gradients of the individual objective functions~$f^i_{\OPT}(x,\lambda) = f_u^i(x, y(x,\lambda))$ are given by~$\nabla f^i_{\OPT} = (\nabla_x f^i_{\OPT},\nabla_{\lambda} f^i_{\OPT})$. In the risk-neutral case, each individual objective function is given by~$f^i_{\RN}(x) = (1/N)\sum_{t=1}^{N}f^i_u\left(x,y(x,\lambda^t)\right)$, and the resulting gradients are~$\nabla f^i_{\RN}(x) \; = \; (1/N)\sum_{t=1}^{N} \nabla f^{i\,t}_{\RN}(x)$, where~$f^{i\,t}_{\RN}(x) = f_u^i(x,y(x,\lambda^t))$ for all~$i \in \{1,\ldots,p\}$ and~$t \in \{1,\ldots,N\}$. Developing an algorithm for the risk-averse case by following the approach introduced in Section~\ref{sec:adj_grad_LL_multiobj_prob_general} leads to a constrained problem like~\eqref{prob:single_bilevel_pareto_practical} with multiple objective functions. Solving such a problem requires an algorithm for multi-objective constrained problems and, therefore, further research is needed to make the resulting algorithm efficient, robust, and scalable. 

\section*{Acknowledgments}
This work is partially supported by the U.S. Air Force Office of Scientific Research (AFOSR) award FA9550-23-1-0217.
 

	
\end{document}